\newcounter{ProblemCounter}
\newtheorem{lem}[theorem]{Lemma}
\newtheorem{prop}[theorem]{Proposition}
\theoremstyle{definition}
\theoremstyle{definition}
\newcommand{\mun}{\mathfrak{u}(n)}
\newcommand{\Z}{\mathbb{Z}}
\newcommand{\N}{\mathbb{N}}
\newcommand{\Q}{\mathbb{Q}}
\newcommand{\K}{\mathbb{K}}
\newcommand{\Qp}{\mathbb{Q}_{\geq 0}}
\newcommand{\Qpp}{\mathbb{Q}_{>0}}
\newcommand{\Zp}{\mathbb{Z}_{\geq 0}}
\newcommand{\Zpp}{\mathbb{Z}_{>0}}
\newcommand{\UT}{\mathsf{UT}}
\newcommand{\HH}{\operatorname{H}}
\newcommand{\PI}{\operatorname{PI}}
\newcommand{\supp}{\operatorname{supp}}
\newcommand{\card}{\operatorname{card}}
\newcommand{\mG}{\mathcal{G}}
\newcommand{\mL}{\mathcal{L}}
\newcommand{\mC}{\mathcal{C}}
\newcommand{\mS}{\mathcal{S}}
\newcommand{\mH}{\mathcal{H}}
\newcommand{\bl}{\boldsymbol{\ell}}
\newcommand{\ba}{\boldsymbol{a}}
\newcommand{\bb}{\boldsymbol{b}}
\newcommand{\bv}{\boldsymbol{v}}
\newcommand{\bx}{\boldsymbol{x}}
\newcommand{\by}{\boldsymbol{y}}
\newcommand{\bn}{\boldsymbol{n}}
\newcommand{\bzero}{\boldsymbol{0}}
\begin{document}

\title{Semigroup intersection problems in the Heisenberg groups}

\author{Ruiwen Dong}{Department of Computer Science, University of Oxford}{ruiwen.dong@kellogg.ox.ac.uk}{}{}

\authorrunning{R. Dong}

\Copyright{Ruiwen Dong} 

\ccsdesc[500]{Computing methodologies~Symbolic and algebraic manipulation} 

\keywords{semigroup intersection, orbit intersection, matrix semigroups, Heisenberg group, nilpotent groups} 

\category{} 


\supplement{}

\maketitle

\begin{abstract}
We consider two algorithmic problems concerning sub-semigroups of Heisenberg groups and, more generally, two-step nilpotent groups.
The first problem is \emph{Intersection Emptiness}, which asks whether a finite number of given finitely generated semigroups have empty intersection.
This problem was first studied by Markov in the 1940s.
We show that Intersection Emptiness is PTIME decidable in the Heisenberg groups $\operatorname{H}_{n}(\mathbb{K})$ over any algebraic number field $\mathbb{K}$, as well as in direct products of Heisenberg groups.
We also extend our decidability result to arbitrary finitely generated 2-step nilpotent groups.

The second problem is \emph{Orbit Intersection}, which asks whether the orbits of two matrices under multiplication by two semigroups intersect with each other.
This problem was first studied by Babai \emph{et al}.\ (1996), who showed its decidability within commutative matrix groups.
We show that Orbit Intersection is decidable within the Heisenberg group $\operatorname{H}_{3}(\mathbb{Q})$.
\end{abstract}

\newpage

\section{Introduction}
The computational theory of matrix groups and semigroups is one of the
oldest and most well-developed parts of computational algebra.
Dating back to the work of Markov~\cite{markov1947certain} in the 1940s, the area plays an essential role in analysing system dynamics, with notable applications in automata theory and program analysis~\cite{blondel2005decidable, choffrut2005some, derksen2005quantum, hrushovski2018polynomial}.
While many computational problems are undecidable even for matrix groups of dimension three and four~\cite{bell2010undecidability, mikhailova1966occurrence, paterson1970unsolvability}, various non-trivial algorithms have been developed for matrix groups satisfying additional constraints, such as commutativity~\cite{babai1996multiplicative}, nilpotency~\cite{https://doi.org/10.48550/arxiv.2208.02164}, solvability~\cite{kopytov1968solvability}, and having dimension two~\cite{bell2017identity, potapov2017decidability}.

As most algorithmic problems for commutative groups are well-understood due to their relatively simple structure, much effort has focused on problems concerning relaxations of the commutativity requirement, such as nilpotency and solvability.
Prominent examples of widely studied groups include the \emph{Heisenberg groups}, as well as the more general \emph{2-step nilpotent groups}.
The Heisenberg groups $\HH_n(\K)$ play an important role in many branches of mathematics, physics and computer science.
They first arose in the description of one-dimensional quantum mechanical systems~\cite{Neumann1931, weyl1950theory}, and have now become an important mathematical object connecting domains like representation theory, theta functions, Fourier analysis and quantum algorithms~\cite{howe1980role, igusa2012theta, kirillov2004lectures, krovi2008efficient, yang1991harmonic}.
From a computational point of view, Heisenberg groups are interesting because they are the simplest non-commutative Lie groups.
Heisenberg groups are included in the class of 2-step nilpotent groups: these are groups whose quotient by their centre is abelian.
Despite being the simplest class of non-commutative groups, 2-step nilpotent groups admit highly non-trivial or even undecidable algorithmic problems, notably due to their ability to encode quadratic equations~\cite{konig2016knapsack}.
For example, decades of research has focused on finding a polynomial-time group isomorphism algorithm for 2-step nilpotent groups, with little success~\cite{babai2011code, GARZON1991237}.

For a set $\mG$ of matrices in some matrix group $G$, denote by $\langle \mG \rangle$ the semigroup generated by the set $\mG$.
In this paper, we consider the following two decision problems for the Heisenberg groups and 2-step nilpotent groups.
\begin{enumerate}[i.]
    \item \textit{(Intersection Emptiness)} Given $M$ sets of matrices $\mG_1,\ldots, \mG_M$, decide whether \\
    $\langle \mG_1 \rangle \cap \cdots \cap \langle \mG_M \rangle = \emptyset$.
    \item \textit{(Orbit Intersection)} Given two sets of matrices $\mG, \mH$ and matrices $S, T$, decide whether \\
    $T \cdot \langle \mG \rangle \cap S \cdot \langle \mH \rangle = \emptyset$.
    \setcounter{ProblemCounter}{\value{enumi}}
\end{enumerate}

Intersection Emptiness was one of the first problems studied in algorithmic semigroup theory.
In the seminal work of Markov~\cite{markov1947certain}, the undecidability of Intersection Emptiness was shown for two sets of $4 \times 4$ integer matrices.
More recently, by encoding the \emph{Post Correspondence Problem}, Halava and Harju showed its undecidability for two sets of $3 \times 3$ upper triangular integer matrices~\cite{halava2007markov}.
For $2 \times 2$ integer matrices, the problem is only known to be NP-hard~\cite{bell2012computational}.
In this paper, we show that Intersection Emptiness is decidable in polynomial time for the Heisenberg groups $\HH_{n}(\K)$ over an arbitrary algebraic number field $\K$, as well as for any direct product of such Heisenberg groups.
In fact, we will prove the decidability result in the more general case of (finitely generated) 2-step nilpotent groups.

The Orbit Intersection problem was first considered by Babai \emph{et al.}~\cite{babai1996multiplicative}, who proved its decidability in \emph{commutative} matrix groups over an algebraic number field.
In this paper, we prove the decidability of Orbit Intersection for matrices in the Heisenberg group $\HH_{3}(\Q)$.

Let us mention some previous work for semigroup algorithmic problems in the Heisenberg groups and 2-step nilpotent groups.
These have seen significant advance in research in recent years. 
Various results have been shown for the following decision problems.
\begin{enumerate}[i.]
    \setcounter{enumi}{\value{ProblemCounter}}
    \item \textit{(Identity Problem)} Given a set of matrices $\mG$, decide whether the identity matrix $I \in \langle \mG \rangle$.
    \item \textit{(Membership Problem)} Given a set of matrices $\mG$ and a matrix $A$, decide whether $A \in \langle \mG \rangle$.
    \item \textit{(Knapsack Problem)} Given matrices $A_1, A_2, \ldots, A_K$ and a matrix $A$, decide whether there exist $(n_1, n_2 \ldots, n_K) \in \N^K$ such that $A = A_1^{n_1} A_2^{n_2} \cdots A_K^{n_K}$.
    \setcounter{ProblemCounter}{\value{enumi}}
\end{enumerate}

The Identity Problem in $\HH_{n}(\Q)$ was shown to be decidable by Ko, Niskanen and Potapov~\cite{ko2017identity}.
Dong~\cite{https://doi.org/10.48550/arxiv.2208.02164} then introduced tools from Lie algebra and strengthened this result to PTIME decidability in $\HH_{n}(\K)$ for algebraic number fields $\K$.
The Membership Problem in $\HH_n(\Q)$ was shown to be decidable by Colcombet, Ouaknine, Semukhin and Worrell.
Their main idea is to use the \emph{Baker-Campbell-Hausdorff (BCH) formula} as well as to incorporate the Membership Problem in a Parikh automaton.
It was left as an open problem whether the Membership Problem in $\HH_n(\K)$ for larger fields $\K$ remains decidable.
On the other hand, it is known that there exist 2-step nilpotent groups with undecidable Membership Problem~\cite{lefaucheux2022private}.
As for the Knapsack Problem, K{\"o}nig, Lohrey and Zetzsche showed its decidability in $\HH_n(\Z)$ by reducing it to solving a single quadratic equation over the natural numbers~\cite{konig2016knapsack}.
They also constructed a 2-step nilpotent group (namely, a direct product of $\HH_3(\Z)$) where the Knapsack Problem is undecidable, using an embedding of Hilbert's Tenth Problem.

We point out that by taking $\mG_1 = \mG$, $\mG_2 = \{I\}$, Intersection Emptiness subsumes the Identity Problem.
Whereas by taking $T = I, S = A, \mH = \{I\}$, the Orbit Intersection problem subsumes the Membership Problem.
Hence, the tools developed in this paper provide a more general approach to semigroup problems in 2-step nilpotent groups.
Our proofs are based on the logarithm of matrices and the BCH formula, whose usage in studying matrix semigroup problems has been introduced in~\cite{colcombet2019reachability} and \cite{https://doi.org/10.48550/arxiv.2208.02164}.
However, our approach goes much deeper in analysing the non-commutative terms of the BCH formula.
We show that these terms are connected with a word combinatorics problem concerning subwords of length two, and show a critical result characterizing the behaviour of these terms.
This will allow us to reduce equations containing word combinatorial terms to pure linear Diophantine equations.

\section{Main results}
In this section we state our main results.
Denote by $\UT(n, \Q)$ the group of $n \times n$ upper triangular rational matrices with ones along the diagonal.
Our main result on Intersection Emptiness is the following. For the formal definition of 2-step nilpotency, see Section~\ref{sec:prelim}.
\begin{restatable}{thrm}{thminterempty}\label{thm:interempty}
    Let $G$ be a 2-step nilpotent subgroup of $\UT(n, \Q)$ for some $n$. 
    Given finite subsets $\mG_1, \ldots, \mG_M$ of $G$, it is decidable in polynomial time whether $\langle \mG_1 \rangle \cap \cdots \cap \langle \mG_M \rangle = \emptyset$.
\end{restatable}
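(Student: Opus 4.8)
The plan is to reduce the intersection-emptiness question to the existence of solutions of a system of mixed linear/word-combinatorial constraints, and then to show these reduce to pure linear Diophantine feasibility, which is in $\mathsf{PTIME}$. Since $G$ is 2-step nilpotent, its lower central series has length two: the commutator subgroup $[G,G]$ lies in the centre $Z(G)$. I would first set up coordinates via the matrix logarithm. For $A \in G \subseteq \UT(n,\Q)$, $N := A - I$ is nilpotent with $N^n = 0$, so $\log A = N - N^2/2 + \cdots$ is a well-defined nilpotent matrix with rational entries, and by the BCH formula, since all triple commutators vanish in the associated Lie algebra,
\begin{equation*}
\log(AB) = \log A + \log B + \tfrac12[\log A, \log B].
\end{equation*}
Thus, writing $x_i := \log g_i$ for the generators $g_i$ of one set $\mG_j$, any product $g_{i_1} g_{i_2} \cdots g_{i_t}$ has logarithm
\begin{equation*}
\sum_{k} x_{i_k} + \tfrac12 \sum_{k<\ell} [x_{i_k}, x_{i_\ell}].
\end{equation*}
The first (abelian) part depends only on the Parikh image, i.e. on how many times each generator is used; the second part is a sum over \emph{ordered pairs} of positions, so it depends on the \emph{number of occurrences of each length-two subword} $g_a \cdots g_b$ (with $a$ before $b$). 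This is where the word-combinatorics enters: a product's logarithm is determined by a Parikh vector together with a matrix of subword-of-length-two counts.

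**Next I would** characterise, for a fixed multiset of generators (fixed Parikh vector $\bn = (n_1,\dots,n_d) \in \N^d$), exactly which subword-count matrices $(c_{ab})_{a,b}$ are realisable by \emph{some} ordering of those generators. This is the critical combinatorial lemma the introduction alludes to. The realisable set should be cut out by simple linear relations: for $a \ne b$ one must have $c_{ab} + c_{ba} = n_a n_b$ (every ordered pair of distinct-label occurrences contributes to exactly one of the two), $c_{aa} = \binom{n_a}{2}$, plus some transitivity-type inequalities (a variant of the conditions for a tournament/linear-order to be realisable). Crucially, because the central term only sees the antisymmetric combination $c_{ab} - c_{ba}$ after pairing $[x_a,x_b]$ with $[x_b,x_a] = -[x_a,x_b]$, I only need to understand the achievable values of $t_{ab} := c_{ab} - c_{ba} \in \{-n_an_b, \dots, n_an_b\}$ subject to $n_a n_b \equiv t_{ab} \pmod 2$, and I expect that \emph{every} such vector of $t_{ab}$'s satisfying the obvious parity and range constraints is achievable (perhaps needing $n_a,n_b \ge 1$), with the range itself being expressible by linear inequalities in $\bn$. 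Establishing this "no hidden obstructions" statement — that the antisymmetrised subword counts range over a full linearly-described polytope of integer points — is the main obstacle and the technical heart of the argument.

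**With that in hand**, the logarithm of an arbitrary element of $\langle \mG_j \rangle$ is
\begin{equation*}
\sum_{a} n_a^{(j)} x_a^{(j)} + \tfrac14 \sum_{a<b} t_{ab}^{(j)} [x_a^{(j)}, x_b^{(j)}],
\end{equation*}
where $\bn^{(j)} \in \N^d$ ranges over nonzero Parikh vectors and, for each such $\bn^{(j)}$, the $t_{ab}^{(j)}$ range over the linearly-constrained integer box described above (with bounds depending linearly on the entries $n_a^{(j)} n_b^{(j)}$ — note the products, which I handle by introducing fresh integer variables $p_{ab}^{(j)} = n_a^{(j)} n_b^{(j)}$ and observing that the constraints $|t_{ab}^{(j)}| \le p_{ab}^{(j)}$, $t_{ab}^{(j)} \equiv p_{ab}^{(j)} \pmod 2$ together with $p_{ab}^{(j)} \le$ large bounds suffice, since the $t$'s are what matter and any large enough $p$ is attainable once the $n$'s grow; small cases are finite and handled directly). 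Demanding $\langle \mG_1\rangle \cap \cdots \cap \langle \mG_M\rangle \neq \emptyset$ means: the $M$ expressions above (for $j=1,\dots,M$) are all equal as matrices — equivalently, their abelian parts agree and their central parts agree. This is a conjunction of linear equations over $\Q$ in the integer unknowns $n_a^{(j)}, t_{ab}^{(j)}, p_{ab}^{(j)}$, augmented by the linear box constraints, the parity congruences, and nonzero-Parikh conditions $\sum_a n_a^{(j)} \ge 1$. That is precisely an integer linear program / system of linear Diophantine (in)equalities with congruences; feasibility of such systems is decidable in $\mathsf{PTIME}$ when the dimension (number of variables and constraints) is fixed — and here it is bounded by a polynomial in the input and, for fixed $n$, $M$, and generator counts, is of bounded size — while in general one invokes Lenstra-type results or, since the congruences and the system have bounded coefficient complexity after a change of variables, classical polynomial-time linear-Diophantine solvers.

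**A few details to tighten.** (1) One must verify that $x_a^{(j)} = \log g_a^{(j)}$ and the brackets $[x_a^{(j)}, x_b^{(j)}]$ can be computed in polynomial time from the input matrices (they are rational matrices of polynomial bit-size, since $n$ is fixed and logarithm of a unitriangular matrix is a finite polynomial expression). (2) One must express "two matrices in $G$ are equal" in terms of equality of logarithms — immediate, since $\log$ is a bijection $G \to \log G$. (3) The separate treatment of small Parikh vectors (where the $t$-box is not yet the full predicted polytope) contributes only finitely many extra disjuncts of bounded size, preserving the overall polynomial bound. (4) Finally one must confirm the resulting system has integer solutions iff the semigroup intersection is nonempty in \emph{both} directions: realisability of the $t$-vectors (the combinatorial lemma) gives the forward direction, and reading off an explicit ordering gives an actual common element for the converse.
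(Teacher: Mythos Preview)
Your overall setup via the BCH formula is correct, but there are two genuine gaps that prevent the argument from going through.

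\textbf{The combinatorial lemma as stated is false.} You conjecture that for a fixed Parikh vector $\bn$, the antisymmetrised subword counts $t_{ab} = c_{ab} - c_{ba}$ range over the \emph{entire} box $\{|t_{ab}| \le n_a n_b,\ t_{ab} \equiv n_a n_b \pmod 2\}$. For $K \ge 3$ this fails: take $n_1 = n_2 = n_3 = 1$; then $(t_{12},t_{13},t_{23}) = (1,-1,1)$ satisfies the box and parity constraints but corresponds to the cyclic order $A_1 < A_2 < A_3 < A_1$, which no word realises. The paper proves a weaker but sufficient statement (Proposition~\ref{prop:nperm}): every tuple with $|C_{ij}| \le \frac{\ell_i\ell_j}{4K^2} - 2K(\ell_i+\ell_j) - 4K^2$ and correct parity is realisable. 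The point is not that the box is tight, but that a ball of \emph{quadratic} radius in $\bl$ is attainable.

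\textbf{You miss the homogeneity trick, and this is what gives PTIME.} Your route introduces auxiliary variables $p_{ab} = n_a n_b$, which is a quadratic constraint; invoking Lenstra for the resulting system does not yield polynomial time since the number of variables grows with the input. The paper instead exploits that the intersection problem (unlike Orbit Intersection) is \emph{homogeneous}: if $(\bl_m, c_{mij})$ satisfies the equality, so does $(N\bl_m, Nc_{mij})$ for any $N$. Scaling by large even $N$ makes the quadratic achievability bound dominate the linear $|Nc_{mij}|$, so the box constraints disappear entirely. What remains is a system of homogeneous linear equations over $\Q$ in $(\ell_{mj}, c_{mij})$, subject only to $\bl_m \in \Zp^{K_m}\setminus\{\bzero\}$ and the crucial \emph{support constraint} that $c_{mij}$ may appear only when $i,j \in \supp(\bl_m)$. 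This last constraint is handled by an iterative support-refinement loop (Algorithm~\ref{alg:inter}): compute the support of $\Zp^{\sum K_m}$ intersected with the projected solution space, shrink the index sets $S_m$, and repeat. Each iteration is polynomial-time linear algebra plus the PTIME support computation of Lemma~\ref{lem:compsupp}, and the loop runs at most $\sum K_m$ times.

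In short: your reduction leads to a nonlinear integer program whose PTIME solvability is unjustified, whereas the paper's reduction---via homogeneity and a weaker combinatorial lemma---lands in homogeneous linear algebra with a support fixpoint, which is genuinely polynomial.
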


For $n \geq 3$, the \emph{Heisenberg group} $\HH_n(\K)$ over a field or commutative ring $\K$ is defined as
\[
\HH_n(\K) \coloneqq 
\left\{
\begin{pmatrix}
1 & \ba^{\top} & c \\
0 & I_{n-2} & \bb \\
0 & 0 & 1 \\
\end{pmatrix}
, \text{ where $\ba, \bb \in \K^{n-2}$, $c \in \K$ } \right\},
\]
where we use the notation $I_{d}$ for the identity matrix of dimension $d$.
Decidability results for the Heisenberg groups and for 2-step nilpotent groups follow as a corollary of Theorem~\ref{thm:interempty}.
\begin{restatable}{cor}{corheisnilp}\label{cor:heisnilp}
    Intersection Emptiness is decidable:
    \begin{enumerate}[(i)]
        \item in PTIME, for the Heisenberg groups $\HH_{n}(\K)$ over any algebraic number field $\K$, and for any direct product of Heisenberg groups.
        \item for finitely generated 2-step nilpotent groups\footnote{We suppose that the structure of the group is given by a finite presentation or a consistent polycyclic presentation (see~\cite[Chapter~8]{holt2005handbook}).}.
    \end{enumerate}
\end{restatable}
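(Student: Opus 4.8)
The plan is to reduce Corollary~\ref{cor:heisnilp} to Theorem~\ref{thm:interempty} by, in each case, exhibiting a faithful embedding of the ambient group into $\UT(n, \Q)$ as a $2$-step nilpotent subgroup, and checking that this embedding (together with the images of the generating sets) can be computed in polynomial time in the input size. Once we have such an embedding, the Intersection Emptiness instance transforms into an equivalent instance over a $2$-step nilpotent subgroup of $\UT(n, \Q)$, and Theorem~\ref{thm:interempty} applies directly.

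For part~(i), I would first treat a single Heisenberg group $\HH_n(\K)$ over an algebraic number field $\K$. Fixing an integral basis (or simply a $\Q$-basis) $\omega_1, \ldots, \omega_d$ of $\K$ over $\Q$, each element $x \in \K$ acts on $\K \cong \Q^d$ by multiplication, giving a ring embedding $\K \hookrightarrow \mathrm{M}_d(\Q)$. Applying this entrywise to $\HH_n(\K) \subseteq \UT(n, \K)$ and then flattening the resulting block-triangular matrices yields an embedding $\HH_n(\K) \hookrightarrow \UT(nd, \Q)$; one checks that the image is still $2$-step nilpotent (the commutator subgroup lands in the centre, since this holds block-wise and the embedding is a homomorphism). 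The generators of the $\mG_i$ are given by matrices whose entries are elements of $\K$ specified by their coordinates in the chosen basis, so their images under this embedding are computable in polynomial time. For a direct product $\HH_{n_1}(\K_1) \times \cdots \times \HH_{n_k}(\K_k)$, I would embed each factor into some $\UT(m_j, \Q)$ as above and then use the standard block-diagonal embedding of a direct product of upper-triangular groups into $\UT(\sum_j m_j, \Q)$, which preserves $2$-step nilpotency (the property is closed under direct products and subgroups) and is again polynomial-time computable.

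For part~(ii), given a finitely generated $2$-step nilpotent group $G$ presented either by a finite presentation or a consistent polycyclic presentation, I would invoke the structure theory of such groups to produce a faithful matrix representation over $\Q$ with $2$-step nilpotent image inside some $\UT(n, \Q)$: a finitely generated $2$-step nilpotent group is in particular finitely generated nilpotent, hence has a torsion-free subgroup of finite index, and finitely generated torsion-free nilpotent groups embed into $\UT(n, \Z)$ (the classical Jennings--Mal'cev embedding, computable from a polycyclic presentation), while the finite extension is handled by inducing the representation. Alternatively, from a consistent polycyclic presentation one can build the embedding directly via the Mal'cev coordinates. Since here no complexity bound is claimed, it suffices that the embedding and the images of the given generators are effectively computable; then Theorem~\ref{thm:interempty} gives decidability. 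The main obstacle is part~(ii): extracting an explicit, effectively computable faithful embedding into $\UT(n, \Q)$ from an arbitrary finite presentation of a $2$-step nilpotent group — one must first certify $2$-step nilpotency and pass to a polycyclic presentation (which is effective for nilpotent groups of known nilpotency class), and then carry out the Mal'cev coordinatization; the bookkeeping to ensure the image is genuinely $2$-step nilpotent (and not merely nilpotent of higher class due to a poor choice of representation) is the delicate point, whereas part~(i) is essentially a routine change-of-ring computation.
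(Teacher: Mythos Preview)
Your argument for part~(i) is correct and essentially identical to the paper's: embed $\HH_n(\K)$ into $\UT(nd,\Q)$ via the regular representation of $\K$ over $\Q$, handle direct products by block diagonals, and apply Theorem~\ref{thm:interempty}.

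Part~(ii), however, has a genuine gap. A finitely generated $2$-step nilpotent group can have torsion (already $\Z/2\Z$ is an example), whereas $\UT(n,\Q)$ is torsion-free: if $A \in \UT(n,\Q)$ satisfies $A^k = I$ then $k\log A = \log(A^k) = 0$, so $A = I$. Hence there is no faithful embedding of a $2$-step nilpotent group with torsion into any $\UT(n,\Q)$, and your line ``the finite extension is handled by inducing the representation'' cannot be made to work as stated; induction will take you out of the unitriangular group and Theorem~\ref{thm:interempty} no longer applies.

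The paper's route is different and does not attempt a faithful embedding into $\UT(n,\Q)$. Instead it uses the structural fact that every finitely generated $2$-step nilpotent group $G$ embeds into a direct product $A \times G_0$ with $A$ finite and $G_0$ a $2$-step nilpotent subgroup of some $\UT(n,\Q)$. One then \emph{projects} to $G_0$ (a non-injective map) and proves that $\bigcap_m \langle \mG_m \rangle \neq \emptyset$ in $G$ if and only if $\bigcap_m \langle \pi_0(\phi(\mG_m)) \rangle \neq \emptyset$ in $G_0$. The forward direction is immediate; for the converse, if $h$ lies in every $\pi_0(\phi(\langle \mG_m \rangle))$, pick $(a_m, h) \in \phi(\langle \mG_m \rangle)$ and raise to the power $\lvert A\rvert$: then $(a_m, h)^{\lvert A\rvert} = (1, h^{\lvert A\rvert})$ lies in every $\phi(\langle \mG_m \rangle)$, giving a common element. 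This power trick, which kills the finite component, is the missing idea in your proposal.
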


Fix a group $G$. 
Given an element $T \in G$ and a subset $\mG$ of $G$, denote by $T \cdot \langle \mG \rangle$ the orbit of $T$ under right multiplication by the semigroup $\langle \mG \rangle$.
That is,
$
T \cdot \langle \mG \rangle \coloneqq \{T \cdot s \mid s \in \langle \mG \rangle\}.
$
Our main result concerning Orbit Intersection is the following.
\begin{restatable}{thrm}{thmshiftinter}\label{thm:shiftinter}
    Given elements $T, S \in \HH_3(\Q)$ and two finite subsets $\mG, \mH$ of $\HH_3(\Q)$, it is decidable whether $T \cdot \langle \mG \rangle \cap S \cdot \langle \mH \rangle = \emptyset$.
\end{restatable}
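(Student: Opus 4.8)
The plan is to reduce the Orbit Intersection problem in $\HH_3(\Q)$ to a decidable arithmetic question by passing to logarithmic coordinates and carefully analysing the non-commutative contributions of the BCH formula, exactly as in the proof of Theorem~\ref{thm:interempty}, but now keeping track of the "shift" elements $T$ and $S$. Write $\mG = \{G_1, \ldots, G_p\}$ and $\mH = \{H_1, \ldots, H_q\}$. A generic element of $T \cdot \langle \mG \rangle$ is a product $T \cdot G_{i_1} G_{i_2} \cdots G_{i_k}$ over a word $w = i_1 \cdots i_k$, and similarly for $S \cdot \langle \mH \rangle$. First I would use the logarithm map, which is a polynomial bijection $\HH_3(\Q) \to \mathfrak{h}_3(\Q) \cong \Q^3$, together with the BCH formula, which in the 2-step nilpotent case truncates to $\log(AB) = \log A + \log B + \tfrac12[\log A, \log B]$. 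Writing $\log G_i = (a_i, b_i, *)$ for the "abelianized" part and tracking the central coordinate, the central coordinate of $T G_{i_1}\cdots G_{i_k}$ becomes an affine function of the Parikh image of $w$ (how many times each letter occurs) plus a bilinear correction term $\sum_{r < s} \tfrac12(a_{i_r} b_{i_s} - a_{i_s} b_{i_r})$ that depends on the order of the letters, i.e.\ on the number of occurrences of each length-two subword $ij$ in $w$.

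The key structural step, which is the heart of the paper's technique, is to characterize which vectors of length-two-subword counts are realizable by actual words with a prescribed Parikh image. I would invoke the combinatorial characterization already developed for Theorem~\ref{thm:interempty}: given the letter-multiplicities $n_1, \ldots, n_p$, the achievable value of the antisymmetric bilinear form $\sum_{r<s}(a_{i_r} b_{i_s} - a_{i_s} b_{i_r})$ over all words with that Parikh image forms (the integer points of) an interval, whose endpoints are linear — or piecewise-linear — functions of the $n_i$. This reduces "$\exists$ word" to "$\exists$ nonnegative integers $n_i$, and an integer $z$ in a computable interval depending linearly on the $n_i$, satisfying the affine equation coming from the abelianized coordinates and the central coordinate." Concretely, the abelianized coordinates force $\sum_i n_i (a_i,b_i) + \text{(shift from }T) = \sum_j m_j (a'_j, b'_j) + \text{(shift from }S)$, which is a system of linear Diophantine equations in $(n_i)$ and $(m_j)$; the central coordinate then becomes one more equation involving these variables, the two free interval parameters, and — crucially — it will be \emph{quadratic}, since the abelianized data appears bilinearly against itself through the correction term even after the subword counts are eliminated.

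This is where the main obstacle lies, and where $\HH_3$ (as opposed to higher $\HH_n$) is essential. In $\HH_3(\Q)$ the abelianization is $2$-dimensional and the central derived coordinate is $1$-dimensional, so after eliminating the interval parameters one is left with a single quadratic Diophantine equation (or a finite disjunction of such) in the multiplicity variables $(n_i, m_j)$, subject to a system of linear constraints and the nonnegativity constraints. The plan is then to appeal to decidability of satisfiability of such a system: either it reduces to a one-variable-per-block quadratic after solving the linear part and substituting a parametrization of the solution lattice, which can be decided directly, or — in the genuinely two-dimensional residual cases — one observes that the quadratic form has a definite signature controlled by the antisymmetric pairing and invoke the decidability of systems of quadratic equations in few variables over $\N$ (this is the same reduction used by K\"onig, Lohrey and Zetzsche for Knapsack in $\HH_n(\Z)$). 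I would expect the bookkeeping around the endpoints of the achievable interval being only piecewise-linear (so the reduction splits into finitely many polyhedral regions), and around clearing denominators to get back to $\Z$ from $\Q$, to be the most delicate but ultimately routine parts; the conceptual crux is that $\HH_3$ keeps the residual Diophantine problem at the threshold of decidability, whereas the undecidability of Knapsack in $\HH_3(\Z) \times \HH_3(\Z)$ shows this does not survive taking products.
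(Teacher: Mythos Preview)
Your proposal has a genuine gap: the reduction you sketch does not land in a decidable arithmetic problem, and the paper's actual mechanism --- a case split on the dimension of the cone $\mC_{\mG}\cap\mC_{\mH}$ --- is precisely what you are missing.

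First, the claim that the achievable values of the central correction $\sum_{i<j}\delta_{ij}(w)\,\alpha_{ij}$ form an interval with \emph{linear} (or piecewise-linear) endpoints in the Parikh image is wrong: already for two letters the range of $\delta_{12}$ is $[-n_1 n_2,\, n_1 n_2]$ (subject to a parity condition), and Proposition~\ref{prop:nperm} makes the quadratic nature of the bound explicit in general. With the correct quadratic bounds, your system becomes: linear equations in $(n_i,m_j,z_1,z_2)$, nonnegativity constraints, a mod~2 condition, and \emph{quadratic inequalities} $|z_1|\le Q_1(\boldsymbol n)$, $|z_2|\le Q_2(\boldsymbol m)$. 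This is not a single quadratic Diophantine equation, and your appeal to the K\"onig--Lohrey--Zetzsche Knapsack result does not apply: there the word is forced to be $A_1^{n_1}\cdots A_K^{n_K}$, so $\delta_{ij}=n_in_j$ exactly and one really does obtain a quadratic \emph{equation}; in your free-word setting the $\delta_{ij}$ are extra variables bounded by quadratics, which is a different and not obviously decidable problem.

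The paper sidesteps the quadratic issue entirely by the dichotomy on $\dim(\mC_{\mG}\cap\mC_{\mH})$. If the intersection is two-dimensional (Proposition~\ref{prop:shifttolinalg}), there is an interior direction along which one can scale any candidate solution; since the quadratic bound on $\delta_{ij}$ grows like $N^2$ while the value required by the central equation grows like $N$, the word-combinatorial constraints become \emph{automatically satisfiable} for large $N$, and the problem collapses to linear equations plus mod~2. If the intersection has dimension at most one (Proposition~\ref{prop:easy}), a normal vector to the containing line shows that letters whose $\varphi$-image lies off that line can occur only boundedly many times; one enumerates their positions, and the remaining letters all commute, so each case is a linear Diophantine system. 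In neither branch does any quadratic Diophantine problem survive. This dichotomy is the missing idea in your plan.
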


\section{Preliminaries}\label{sec:prelim}
\paragraph*{Convex geometry}
Let $V$ be a $\Q$-linear space.
A subset $\mC \subseteq V$ is called a \emph{cone} if $a \in \mC$ implies $a\Qp \subseteq \mC$, and $a, b \in \mC$ implies $a + b \in \mC$.
Given a set of vectors $\mS \subseteq V$, denote by $\langle \mS \rangle_{\Qp}$ the \emph{cone generated by $\mS$},
that is, the smallest cone of $V$ containing $\mS$.
The \emph{dimension} of a cone $\mC$ is the dimension of the smallest linear space containing $\mC$.

The \emph{support} of a vector $\bl = (\ell_1, \ldots, \ell_K) \in \Zp^K$ is defined as the set of indices where the entry of $\bl$ is non-zero:
\[
\supp(\bl) \coloneqq \{i \in \{1, \ldots, K\} \mid \ell_i > 0\}.
\]
The \emph{support} of a subset $\Lambda$ of $\Zp^K$ is defined as the union of supports of all vectors in $\Lambda$:
\[
\supp(\Lambda) \coloneqq \bigcup_{\bl \in \Lambda} \supp(\bl) = \{i \mid \exists (\ell_1, \ldots, \ell_K) \in \Lambda, \ell_i > 0\}.
\]

In this paper, we will need to compute the support of sets of the form $\Lambda = \Zp^K \cap V$, where $V$ is a $\Q$-linear subspace of $\Q^K$.

\begin{lem}[{\cite[Lemma~2.4]{https://doi.org/10.48550/arxiv.2208.02164}}]\label{lem:compsupp}
Given $V$ a $\Q$-linear subspace of $\Q^K$, represented as the solution set of linear homogeneous equations, one can compute the support of $\Lambda = \Zp^K \cap V$ in polynomial time.
\end{lem}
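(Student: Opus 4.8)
The plan is to reduce the computation of $\supp(\Lambda)$ to $K$ linear programming feasibility tests. The first step is to replace the integer lattice points by rational points: I claim $\supp(\Lambda) = \supp(\mC)$, where $\mC \coloneqq V \cap \Qp^K$ is the rational polyhedral cone of nonnegative vectors in $V$. The inclusion $\supp(\Lambda) \subseteq \supp(\mC)$ is immediate from $\Lambda \subseteq \mC$. For the reverse inclusion, if $\bx \in \mC$ has $x_i > 0$, then multiplying $\bx$ by a common denominator $N \in \Zpp$ of its coordinates yields $N\bx \in \Zp^K \cap V = \Lambda$, which still has positive $i$-th coordinate; hence $i \in \supp(\Lambda)$.

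The second step is to compute $\supp(\mC)$ one index at a time. Since $\mC$ is a cone, an index $i$ lies in $\supp(\mC)$ if and only if the linear system
\[
A\bx = \bzero, \qquad \bx \geq \bzero, \qquad x_i \geq 1
\]
is feasible, where $A$ is the (rational) matrix whose kernel is the given subspace $V$: if this system has a solution $\bx$, then $\bx \in \mC$ with $x_i \geq 1 > 0$; conversely, if $\by \in \mC$ has $y_i > 0$, then $\by / y_i \in \mC$ solves the system. Each such feasibility question is a linear program with rational data of bit-size polynomial in the input, hence decidable in polynomial time by Khachiyan's ellipsoid method (or any polynomial-time LP algorithm). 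Performing this test for $i = 1, \ldots, K$ and returning the set of indices for which it succeeds produces $\supp(\mC) = \supp(\Lambda)$ in total polynomial time.

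I do not expect a genuine obstacle: the argument is the standard fact that the support of an explicitly presented rational cone is computable by $K$ calls to an LP oracle, together with the elementary scaling step that transfers the claim from $\Zp^K$ to $\Qp^K$. If an algebraic formulation is preferred, one may alternatively observe that, because $\Lambda$ is a submonoid of $\Zp^K$, the sum of witnesses over all coordinates that can be made positive is a single element of $\Lambda$ whose support equals $\supp(\Lambda)$; but the LP-based procedure above already yields the desired polynomial-time bound directly, so I would present it that way.
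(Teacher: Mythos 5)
Your argument is correct, and it is essentially the same proof as the one behind the cited lemma: reduce $\supp(\Zp^K \cap V)$ to $\supp(\Qp^K \cap V)$ by clearing denominators, then decide membership of each index $i$ in the support via a polynomial-time linear programming feasibility test (the paper itself only cites this lemma from \cite{https://doi.org/10.48550/arxiv.2208.02164}, where the proof proceeds along these standard lines). No gaps; the scaling step and the normalization $x_i \geq 1$ are exactly what is needed.
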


\paragraph*{The group $\UT(n, \Q)$ and 2-step nilpotent groups}

Denote by $\UT(n, \Q)$ the group of $n \times n$ upper triangular rational matrices with ones along the diagonal.
Let $\K$ be an algebraic number field.
$\K$ can be considered as a linear space over $\Q$ of dimension $d \coloneqq [\K : \Q]$.
Let $k_1, \ldots, k_d \in \K$ be a $\Q$-basis of this linear space.
Throughout this paper, an element $k$ of $\K$ is represented as a tuple $(a_1, \ldots, a_d) \in \Q^d$ such that $k = a_1 k_1 + \cdots a_d k_d$.
An element $k$ of $\K$ acts on $\K$ by multiplication, and can therefore be considered as an endomorphism of the $\Q$-linear space $\K$.
Associate $k$ with the matrix that represents this endomorphism, then we have an (injective) embedding $\iota: \K \hookrightarrow \Q^{d \times d}$.
In particular, $\iota(1) = I_d$.
This embedding is effectively computable in polynomial time~\cite{cohen1993course}.

The embedding $\iota$ extends to an embedding $\HH_{n}(\K) \hookrightarrow \UT(nd, \Q)$, which we also denote by $\iota$. Note that for any matrix $A \in \HH_{n}(\K)$, the total bit size of entries in $\iota(A)$ is at most quadratic in the total bit size of entries in $A$.
Therefore, throughout this paper, we will work with matrices in $\iota(\HH_{n}(\K)) \subseteq \UT(nd, \Q)$, knowing that any polynomial time algorithm in $\UT(nd, \Q)$ will translate to a polynomial time algorithm in $\HH_{n}(\K)$.

Let $G$ be an arbitrary group.
The \emph{centre} of $G$ is the normal subgroup $Z(G) \trianglelefteq G$ consisting of elements that commute with every element of $G$ (see~\cite{dructu2018geometric}).
We say that $G$ is \emph{2-step nilpotent} if the quotient $G / Z(G)$ is abelian.
In particular, the Heisenberg groups $\HH_{n}(\K)$, as well as their direct products, are 2-step nilpotent~\cite[Examples~13.36]{dructu2018geometric}.
Every finitely generated 2-step nilpotent group can be embedded as a subgroup of the direct product $A \times G_0$, where $A$ is finite and $G_0$ is a 2-step nilpotent subgroup of $\UT(n, \Q)$ for some $n$~\cite{Baumslag2007LectureNO}.

\paragraph*{Logarithm of matrices and Lie algebra}

The \emph{Lie algebra} $\mun$ is defined as the $\Q$-linear space of $n \times n$ upper triangular rational matrices with \emph{zeros} on the diagonal.
There exist the \emph{logarithm} map
\[
    \log: \UT(n, \Q) \rightarrow \mun, \quad
    A \mapsto \sum_{k = 1}^n \frac{(-1)^{k-1}}{k} (A - I)^k
\]
and the \emph{exponential} map
\[
    \exp: \mun \rightarrow \UT(n, \Q), \quad
    X \mapsto \sum_{k = 0}^n \frac{1}{k!} X^k
\]
which are inverse of one another.
In particular, $\log I = 0$ and $\exp(0) = I$.

The Lie algebra $\mun$ is equipped with the \emph{Lie bracket} $[\cdot, \cdot] : \mun \times \mun \rightarrow \mun$ given by $[X, Y] = XY - YX$.
For a subset or subsemigroup $\mathcal{A}$ of $\UT(n, \Q)$, we naturally denote by $\log \mathcal{A} \coloneqq \{\log a \mid a \in \mathcal{A}\}$ the set of logarithm of matrices in $\mathcal{A}$.

\paragraph*{Parikh Image and length two subwords}
Given a finite alphabet $\mG = \{A_1, \ldots, A_K\}$,
the \emph{Parikh Image} of a word $w = M_1 \cdots M_m$ over the alphabet $\mG$ is the vector $\PI^{\mG}(w) \coloneqq (\PI^{\mG}_1(w), \ldots, \PI^{\mG}_K(w)) \in \Zp^K$, where $\PI^{\mG}_i(w)$ is the number of times $A_i$ appears in $w$.
That is, $\PI^{\mG}_i(w) \coloneqq \card(\{j \mid M_j = A_i\})$.
When the alphabet $\mG$ is clear from the context, we sometimes write $\PI(w), \PI_i(w)$ instead of $\PI^{\mG}(w), \PI^{\mG}_i(w)$.

For $1 \leq i < j \leq K$, let $w$ be a word over the alphabet $\mG$, denote by $\delta^{\mG}_{ij}(w)$ the number of occurrences of the subword $\cdots A_i \cdots A_j \cdots$ minus the number of occurrences of the subword $\cdots A_j \cdots A_i \cdots$ in $w$.
That is, writing $w = M_1 M_2 \cdots M_s$, we have
\[
    \delta_{ij}^{\mG}(w) \coloneqq \delta_{ij}^{\mG,+}(w) - \delta_{ij}^{\mG,-}(w),
\]
where
\begin{align*}
    & \delta_{ij}^{\mG,+}(w) \coloneqq \{(u, v) \mid 1 \leq u < v \leq s, M_u = A_i, M_v = A_j\}, \\
    & \delta_{ij}^{\mG,-}(w) \coloneqq \{(u, v) \mid 1 \leq u < v \leq s, M_u = A_j, M_v = A_i\}
\end{align*}
Again, if the alphabet $\mG$ is clear from the context, we write $\delta_{ij}(w)$ instead of $\delta_{ij}^{\mG}(w)$.
Obviously, we have the parity constraint
\begin{equation}\label{eq:mod2}
\delta_{ij}(w) \equiv \delta_{ij}^{+}(w) + \delta_{ij}^{-}(w) = \PI_i(w) \cdot \PI_j(w) \mod 2.
\end{equation}

\paragraph*{The Baker-Campbell-Hausdorff formula}
Let $G$ be a 2-step nilpotent subgroup of $\UT(n, \Q)$.
The \emph{Baker-Campbell-Hausdorff (BCH) formula}~\cite{baker1905alternants, campbell1897law, hausdorff1906symbolische} states that, given a sequence of matrices $B_1, B_2, \ldots, B_s$ in $G$, we have
\begin{equation}\label{eq:BCHraw}
    \log(B_1 B_2 \cdots B_m) = \sum_{i=1}^m \log B_i + \frac{1}{2}\sum_{1 \leq i < j \leq s} [\log B_i, \log B_j].
\end{equation}

Fix a finite alphabet $\mG = \{A_1, \ldots, A_K\}$ in $G$.
For an arbitrary word $w$ with Parikh Image $\bl = (\ell_1, \ldots, \ell_K)$, applying Equation~\eqref{eq:BCHraw} to the sequence of matrices in $w$ yields
\begin{equation}\label{eq:BCHcomb}
\log w = \sum_{i = 1}^K \ell_{i} \log A_i + \frac{1}{2} \sum_{1 \leq i < j \leq K} \delta_{ij}(w) [\log A_i, \log A_j].
\end{equation}
Here, $\log w$ is understood to be the result of multiplying all matrices appearing in $w$ in order, then taking the logarithm. 
We will adopt this notation throughout this paper.

\section{A combinatorial problem for length two subwords}

First let us describe the general strategy for solving intersection-type decision problems.
Consider a simple example: given two alphabets $\mG = \{A_1, \ldots, A_K\}$, $\mH = \{B_1, \ldots, B_M\}$ in a 2-step nilpotent subgroup of $\UT(n, \Q)$, we want to decide whether $\langle \mG \rangle \cap \langle \mH \rangle \neq \emptyset$.
This boils down to finding two words $v, w$ respectively in the alphabet $\mG$ and $\mH$, such that $\log v = \log w$.
Denote by $\bx = (x_1, \ldots, x_K)$ the Parikh Image of $v$, and by $\by = (y_1, \ldots, y_M)$ the Parikh Image of $w$, then the BCH formula~\eqref{eq:BCHcomb} yields the equivalence between $\log v = \log w$ and
\begin{multline}\label{eq:strict}
    \sum_{i = 1}^K x_{i} \log A_i + \sum_{i < j} \frac{\delta^{\mG}_{ij}(v)}{2} [\log A_i, \log A_j] =
    \sum_{i = 1}^M y_{i} \log B_i + \sum_{i < j} \frac{\delta^{\mH}_{ij}(w)}{2} [\log B_i, \log B_j], \\
    \bx \in \Zp^K, \; \by \in \Zp^M, \quad \PI^{\mG}(v) = \bx, \; \PI^{\mH}(w) = \by. \quad 
\end{multline}
Hence, deciding whether $\langle \mG \rangle \cap \langle \mH \rangle \neq \emptyset$ boils down to solving Equation~\eqref{eq:strict} in the numerical variables $\bx, \by$ and the word variables $v , w$ over alphabets $\mG, \mH$.

Consider a ``relaxed'' version of this problem.
That is, we replace $\delta^{\mG}_{ij}(v)$ and $\delta^{\mH}_{ij}(w)$ by new variables $c_{ij}, d_{ij}$ over integers, without imposing any constraint.
This gives the equation
\begin{multline}\label{eq:relax}
    \sum_{i = 1}^K x_{i} \log A_i + \sum_{1 \leq i < j \leq K} \frac{c_{ij}}{2} [\log A_i, \log A_j] = \sum_{i = 1}^M y_{i} \log B_i + \sum_{1 \leq i < j \leq M} \frac{d_{ij}}{2} [\log B_i, \log B_j], \\
    \bx \in \Zp^K, \; \by \in \Zp^M, \quad c_{ij}, d_{ij} \in \Z \text{ for all } i, j. \quad
\end{multline}
Obviously, if Equation~\eqref{eq:strict} has a solution, then the relaxed version~\eqref{eq:relax} will also admit a solution.
The converse is not necessarily true.
The implicit constraints imposed by the word combinatorial variables $\delta_{ij}^{\mG}(v), \delta_{ij}^{\mH}(w)$ in Equation~\eqref{eq:strict} are highly non-trivial.
(For example, one should at least have $|\delta_{ij}^{\mG}(v)| \leq x_i x_j$ for all $i, j$).
However, these constraints are not reflected by the numerical variables $c_{ij}$ in Equation~\eqref{eq:relax}.

The key idea of this paper is the following surprising fact.
For the two problems we consider (Semigroup Intersection and Orbit Intersection), it is sufficient to solve the relaxed version of the equation, plus several simple constraints (such as the modulo 2 constraint in Equation~\eqref{eq:mod2}).
In particular, given a ``suitable'' solution to the relaxed Equation~\eqref{eq:relax}, we can always construct a solution to Equation~\eqref{eq:strict}.
\emph{A priori}, the values of $\delta^{\mG}_{ij}(v)$ cannot reach all integers like the free variables $c_{ij}$;
nevertheless, when $x_1, \ldots, x_K$ tend towards infinity, the vector $\left(\delta^{\mG}_{ij}(v)\right)_{1 \leq i < j \leq K}$ can in fact reach every value within a ball of radius size $O(|\bx|^2)$, satisfying modulo 2 constraints.
This will suffice to construct a suitable word $v$, as the quadratic radius will eventually dominate the linear term $\sum_{i = 1}^K x_{i} \log A_i$.

This section aims to formalize this idea.
The main result of this section will be Proposition~\ref{prop:nperm}.
First, we prove a simple case where the alphabet consists of two letters.

\begin{lem}\label{lem:twoperm}
    Given an alphabet $\mG = \{A_i, A_j\}$ and non-negative integers $s_i, s_j \in \Zp$, then for every $C \in \Z$ satisfying
    \begin{equation}\label{eq:condtwoperm}
        |C| \leq s_i s_j \quad \text{and} \quad C \equiv s_i s_j \mod 2,
    \end{equation}
    there exists a permutation $w$ of the word $A_i^{s_i} A_j^{s_j}$ such that $\delta_{ij}(w) = C$.
\end{lem}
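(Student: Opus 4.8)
The plan is to parametrize permutations of $A_i^{s_i} A_j^{s_j}$ by the positions of the $A_j$'s among the $A_i$'s and track how $\delta_{ij}$ changes under elementary moves. Writing a permutation as $A_i^{p_0} A_j A_i^{p_1} A_j \cdots A_j A_i^{p_{s_j}}$ with $p_0 + p_1 + \cdots + p_{s_j} = s_i$ and all $p_k \geq 0$, one computes directly that the $k$-th letter $A_j$ (for $k = 1, \ldots, s_j$) contributes $(p_0 + \cdots + p_{k-1})$ to $\delta_{ij}^+$ (the $A_i$'s to its left) and $(p_k + \cdots + p_{s_j})$ to $\delta_{ij}^-$ (the $A_i$'s to its right), so that
\[
\delta_{ij}(w) = \sum_{k=1}^{s_j} \Bigl( (p_0 + \cdots + p_{k-1}) - (p_k + \cdots + p_{s_j}) \Bigr).
\]
In particular the two extreme words $A_i^{s_i} A_j^{s_j}$ and $A_j^{s_j} A_i^{s_i}$ give $\delta_{ij} = s_i s_j$ and $\delta_{ij} = -s_i s_j$ respectively, matching the boundary of the allowed interval in~\eqref{eq:condtwoperm}.

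The key step is to show that as $w$ ranges over all permutations, $\delta_{ij}(w)$ takes \emph{every} value in $\{-s_i s_j, -s_i s_j + 2, \ldots, s_i s_j\}$, i.e.\ every integer of the correct parity in the interval. For this I would use a discrete intermediate value argument: starting from the word $A_i^{s_i} A_j^{s_j}$ (value $s_i s_j$), repeatedly swap one adjacent pair $A_i A_j \mapsto A_j A_i$. Each such swap decreases $\delta_{ij}$ by exactly $2$ (it removes one $(i,j)$-inversion and creates one $(j,i)$-inversion, and leaves all other pairs unchanged), and any permutation of $A_i^{s_i} A_j^{s_j}$ is reachable from $A_i^{s_i} A_j^{s_j}$ by a sequence of such adjacent swaps, ending at $A_j^{s_j} A_i^{s_i}$ (value $-s_i s_j$). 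Hence $\delta_{ij}$ descends from $s_i s_j$ to $-s_i s_j$ in steps of $-2$, so it realizes every intermediate value of the form $s_i s_j - 2t$ for $0 \le t \le s_i s_j$. Since $C$ satisfies $|C| \le s_i s_j$ and $C \equiv s_i s_j \pmod 2$, we have $C = s_i s_j - 2t$ for some such $t$, and the corresponding word $w$ along the swap chain satisfies $\delta_{ij}(w) = C$.

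The only genuine obstacle is making the "adjacent-swap chain connects the two extreme words with every target hit along the way" argument precise — concretely, that from any word one can reach $A_j^{s_j} A_i^{s_i}$ by swaps that each strictly decrease $\delta_{ij}$, so the walk does not get stuck before reaching the minimum. This follows because a word is $A_j^{s_j} A_i^{s_i}$ iff it has no $A_i A_j$ adjacency, i.e.\ iff no decreasing swap is available; so as long as $\delta_{ij}(w) > -s_i s_j$ some adjacent $A_i A_j$ exists and the chain continues. One can also phrase the whole thing without "walks" at all: exhibit an explicit family of words, e.g.\ $A_i^{a} A_j^{s_j} A_i^{s_i - a}$ for $0 \le a \le s_i$, which gives $\delta_{ij} = a s_j - (s_i - a) s_j = (2a - s_i) s_j$, interpolating in steps of $2 s_j$; combined with the finer moves obtained by also sliding a single $A_j$ through the block, one covers all residues and fills the interval in steps of $2$. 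Either way the bookkeeping is elementary, and I expect no conceptual difficulty beyond a careful case split on parity.
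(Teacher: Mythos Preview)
Your proposal is correct and follows essentially the same argument as the paper: start at $A_i^{s_i}A_j^{s_j}$ with $\delta_{ij}=s_is_j$, repeatedly swap an adjacent $A_iA_j$ to $A_jA_i$ (each swap lowering $\delta_{ij}$ by exactly $2$, and such a swap existing unless the word is already $A_j^{s_j}A_i^{s_i}$), thereby hitting every value of the right parity down to $-s_is_j$. The extra parametrization formula and the alternative explicit family you sketch are not needed, but they do no harm.
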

\begin{proof}
For an illustration of the proof, see Figure~\ref{fig:2let}.
We start with the word $w = A_i^{s_i} A_j^{s_j}$, which satisfies $\delta_{ij}(w) = s_i s_j$.
We gradually swap pairs of consecutive letters in $w$: each time we replace an occurrence of consecutive $A_i A_j$ with $A_j A_i$.
An occurrence of $A_i A_j$ can always be found unless we have reached the ``final'' permutation $A_j^{s_j} A_i^{s_i}$.
It is easy to see that each swap reduces the value of $\delta_{ij}(w)$ by 2.
Therefore, by swapping consecutive $A_i A_j$ one by one, $\delta_{ij}(w)$ can reach every value between $\delta_{ij}(A_i^{s_i} A_j^{s_j}) = s_i s_j$ and $\delta_{ij}(A_j^{s_j} A_i^{s_i}) = - s_i s_j$ that has the same parity with $s_i s_j$.
This proves the lemma.
\end{proof}

\begin{figure}[h]
    \centering
    \includegraphics[width=0.6\textwidth, keepaspectratio, trim={0cm 0.8cm 0.5cm 0.6cm},clip]{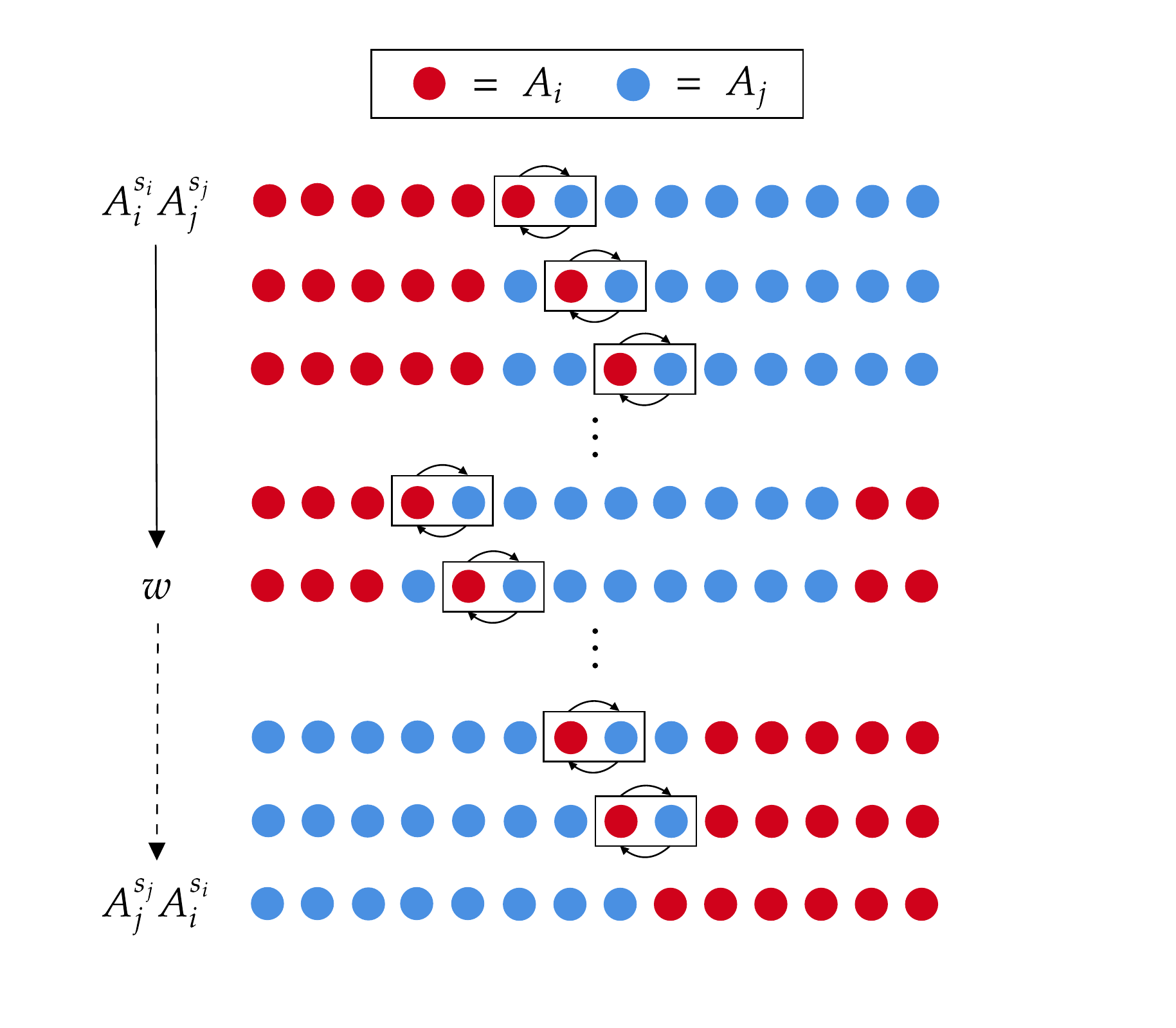}
    \caption{Illustration for the proof of Lemma~\ref{lem:twoperm}.}
    \label{fig:2let}
\end{figure}

\begin{figure}[h]
    \centering
    \includegraphics[width=0.95\textwidth,height=1.0\textheight,keepaspectratio, trim={0.5cm 0.5cm 1cm 1cm},clip]{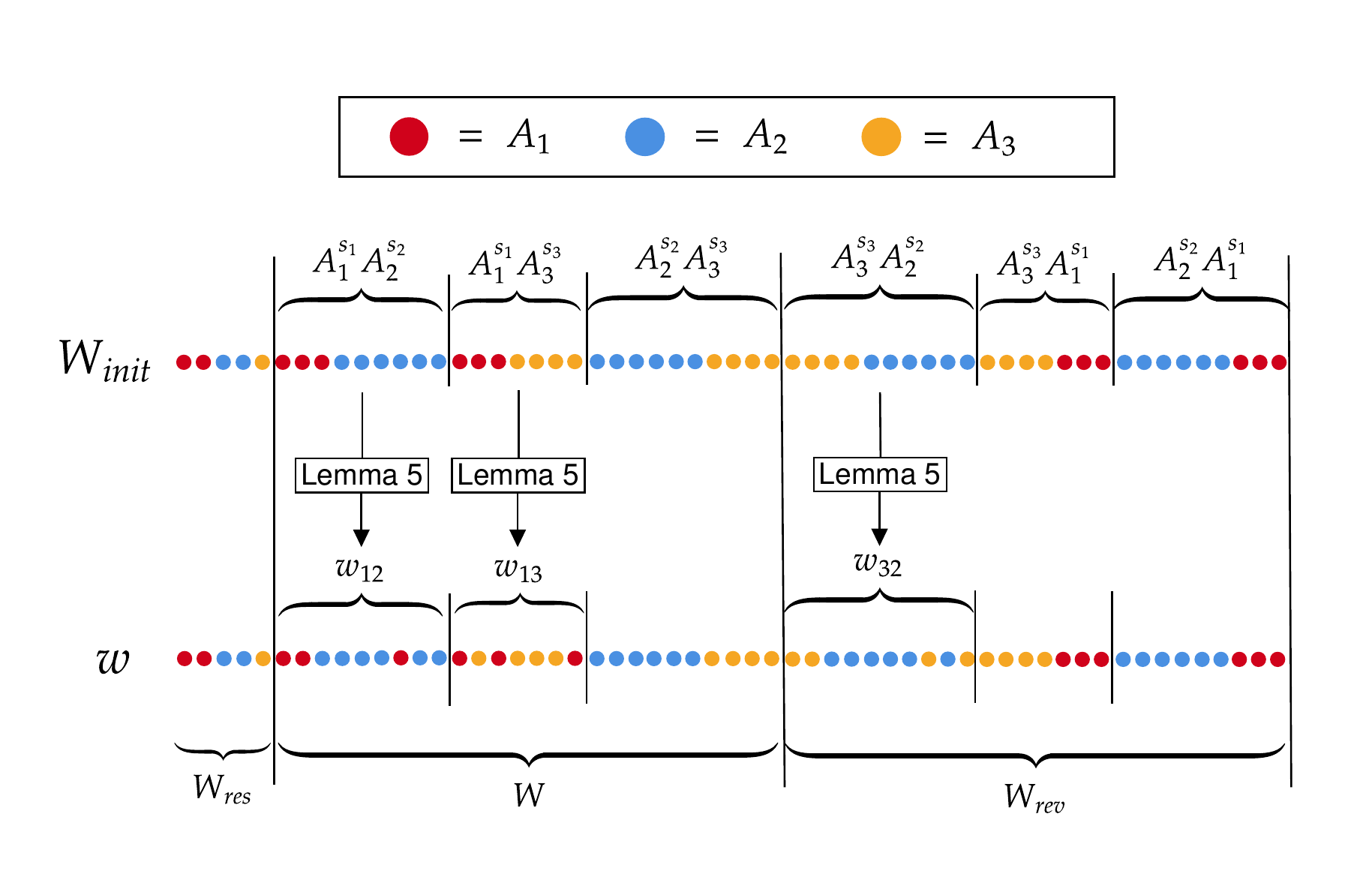}
    \caption{Illustration for the proof of Proposition~\ref{prop:nperm}.}
    \label{fig:morelet}
\end{figure}

We then prove the main result of this section, which generalizes Lemma~\ref{lem:twoperm} to alphabets of more than two letters.

\begin{prop}\label{prop:nperm}
    Fix a finite alphabet $\mG$ of size $K \geq 2$.   
    Then for any tuples $\bl = (\ell_1, \ldots, \ell_K) \in \Zp^K$ and $\{C_{ij}\}_{1 \leq i < j \leq K} \in \Zp^{K(K-1)/2}$ satisfying
    \begin{equation}\label{eq:condCij}
        |C_{ij}| \leq \frac{\ell_i \ell_j}{4K^2} - 2K (\ell_i + \ell_j) - 4K^2, \quad \text{ for all }\; 1 \leq i < j \leq K,
    \end{equation}
    and 
    \begin{equation}\label{eq:condmod}
        C_{ij} \equiv \ell_i \ell_j \mod 2, \quad \text{ for all }\; 1 \leq i < j \leq K,
    \end{equation}
    there exists a word $w$ with Parikh Image $\bl$ such that
    \begin{equation}
        \delta_{ij}(w) = C_{ij}, \quad \text{ for all }\; 1 \leq i < j \leq K.
    \end{equation}
\end{prop}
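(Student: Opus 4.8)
The plan is to bootstrap Proposition~\ref{prop:nperm} from the two-letter case (Lemma~\ref{lem:twoperm}) by a ``divide and conquer'' construction. First I would split each letter-block $A_i^{\ell_i}$ into $K$ roughly-equal sub-blocks of size about $\ell_i/K$, and think of the word $w$ as being assembled from $K^2$ such sub-blocks, one for each ordered pair of letters. The idea is to devote a dedicated ``arena'' to each unordered pair $\{i,j\}$: a region of the word where we interleave a sub-block of $A_i$'s with a sub-block of $A_j$'s (and nothing else), and use Lemma~\ref{lem:twoperm} inside that arena to tune the local contribution to $\delta_{ij}$. The catch is that $\delta_{ij}(w)$ does not only depend on the arena for $\{i,j\}$: every $A_i$ placed before (resp.\ after) every $A_j$ anywhere in $w$ contributes $\pm 1$. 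So I need to carefully control the ``cross terms'' coming from the relative macro-order of the arenas, then correct for them within each arena.

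Concretely, here is the order of steps I would carry out.
\begin{enumerate}[(1)]
  \item \textbf{Coarse layout.} Fix a linear order on the $\binom{K}{2}$ arenas. Decide, for each letter $i$, how its $\ell_i$ copies are distributed among the arenas it participates in (it appears in $K-1$ arenas, as one of the two letters), giving each arena roughly $\ell_i/K \geq \ell_i/K$ copies; keep a small slack so all sub-block sizes are integers and at least, say, $\lfloor \ell_i/K\rfloor - 1$. The arena for $\{i,j\}$ then contains $s_i \approx \ell_i/K$ copies of $A_i$ and $s_j \approx \ell_j/K$ copies of $A_j$.
  \item \textbf{Compute the forced cross term.} Given the coarse layout, compute $\delta_{ij}(w)$ as a function of the internal arrangement of the $\{i,j\}$-arena plus a fixed ``background'' integer $B_{ij}$ coming from the copies of $A_i$ and $A_j$ sitting in all the other arenas and in a predictable relative order. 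The quantity $B_{ij}$ is determined by the coarse layout and satisfies $|B_{ij}| \leq (\ell_i - s_i)(\ell_j - s_j) \leq \ell_i\ell_j$, but more usefully $|B_{ij}| = O(K(\ell_i+\ell_j) + \ell_i\ell_j/K)$ once one observes that two letters' copies in a \emph{common} other arena cancel in pairs and only the boundary/leftover copies contribute unevenly — this is exactly the kind of bound the hypothesis~\eqref{eq:condCij} is shaped to absorb.
  \item \textbf{Local tuning via Lemma~\ref{lem:twoperm}.} Inside the $\{i,j\}$-arena we want the internal $\delta_{ij}$-contribution to equal $C_{ij} - B_{ij}$. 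By Lemma~\ref{lem:twoperm} this is achievable by a permutation of $A_i^{s_i}A_j^{s_j}$ provided $|C_{ij} - B_{ij}| \leq s_i s_j$ and $C_{ij} - B_{ij} \equiv s_i s_j \bmod 2$. The size bound follows from $|C_{ij}| \leq \ell_i\ell_j/(4K^2) - 2K(\ell_i+\ell_j) - 4K^2$, $|B_{ij}| = O(K(\ell_i+\ell_j)+\ell_i\ell_j/K)$, and $s_i s_j \gtrsim \ell_i\ell_j/K^2$ (with constants chosen so everything fits — this is why the hypothesis has the specific shape $\frac{\ell_i\ell_j}{4K^2} - 2K(\ell_i+\ell_j) - 4K^2$).
  \item \textbf{Parity.} Check $C_{ij} - B_{ij} \equiv s_i s_j \bmod 2$. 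Here I use~\eqref{eq:mod2}: $\delta_{ij}(w) \equiv \PI_i(w)\PI_j(w) = \ell_i\ell_j \bmod 2$ always holds for the final word, $B_{ij} \equiv (\ell_i-s_i)(\ell_j-s_j) \bmod 2$, and $C_{ij} \equiv \ell_i\ell_j \bmod 2$ by hypothesis~\eqref{eq:condmod}; a short computation mod $2$ shows $C_{ij} - B_{ij} \equiv s_i s_j \bmod 2$, so Lemma~\ref{lem:twoperm} applies cleanly.
\end{enumerate}
Assembling all arenas in the fixed order and using the tuned internal permutations produces a word $w$ with Parikh image $\bl$ and $\delta_{ij}(w) = C_{ij}$ for all $i<j$.

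The main obstacle, and the part I would spend the most care on, is Step~(2): precisely bounding the ``background'' contribution $B_{ij}$ from the non-$\{i,j\}$ arenas. One must verify that when the copies of $A_i$ and $A_j$ both appear inside some third arena $\{i,k\}$ and $\{j,k\}$ (or rather in arenas that each contain only one of $i,j$), their interleavings with unrelated letters are \emph{irrelevant} to $\delta_{ij}$, and only the \emph{relative block positions} of those copies matter; then $B_{ij}$ reduces to a sum of products $\pm(\text{leftover }A_i\text{-count})\cdot(\text{leftover }A_j\text{-count})$ over a bounded number ($O(K)$) of arena pairs, each factor being $O(\ell_i/K)$ or $O(\ell_j/K)$ or a $O(1)$ slack term. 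Getting the constants right so that $|B_{ij}| + |C_{ij}| \le s_i s_j$ holds under~\eqref{eq:condCij} is the crux; the $1/(4K^2)$ factor and the linear correction $-2K(\ell_i+\ell_j) - 4K^2$ in the hypothesis are exactly the budget for this. Everything else — the coarse layout bookkeeping, the reduction to Lemma~\ref{lem:twoperm}, the parity check — is routine once that estimate is in place. (Figure~\ref{fig:morelet} illustrates the arena decomposition.)
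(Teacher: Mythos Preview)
Your overall architecture---dedicate one ``arena'' to each unordered pair $\{i,j\}$, tune the internal arrangement with Lemma~\ref{lem:twoperm}, and absorb the rest as a fixed background term $B_{ij}$---is exactly the paper's strategy. The gap is in Step~(2): your estimate $|B_{ij}| = O\!\left(K(\ell_i+\ell_j) + \ell_i\ell_j/K\right)$ does not hold for a generic linear ordering of the arenas. Outside the $\{i,j\}$-arena, the letter $A_i$ sits in $K-2$ other arenas $\{i,k\}$ (plus a leftover block), and likewise $A_j$ sits in $K-2$ arenas $\{j,k'\}$. The background $B_{ij}$ is a signed sum over \emph{all} block-pairs, hence over $\Theta(K^2)$ pairs (one for each choice of $k$ and $k'$), not $O(K)$ pairs as you claim. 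You only account for the ``diagonal'' pairs $k=k'$; the off-diagonal pairs $k\neq k'$ each contribute $\pm s_i s_j \approx \pm \ell_i\ell_j/K^2$ and do not cancel for a generic arena order. So $B_{ij}$ can be as large as $\Theta(\ell_i\ell_j)$, which swamps the budget $s_i s_j \approx \ell_i\ell_j/K^2$ available in Lemma~\ref{lem:twoperm}, and the hypothesis~\eqref{eq:condCij} is nowhere near strong enough to compensate.

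The paper fixes this not by bounding $B_{ij}$ but by forcing it to (nearly) vanish. It takes each letter in $2(K-1)$ equal pieces of size $s_i$ (so $\ell_i = 2(K-1)s_i + r_i$ with $0\le r_i<2(K-1)$), lays out \emph{two} copies of the arena sequence---one forward ($W$) and one reversed ($W_{rev}$)---and prepends a tiny residue word $W_{res}=A_1^{r_1}\cdots A_K^{r_K}$. Because $W\cdot W_{rev}$ is a palindrome, $\delta_{ij}(W\cdot W_{rev})=0$ identically, so the entire background reduces to the contribution of $W_{res}$, which is $O\!\left(K^2 + K(\ell_i+\ell_j)\right)$. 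This is exactly the correction term in~\eqref{eq:condCij}. With two copies of each arena available (one in $W$, one in $W_{rev}$), the paper then replaces either the block $A_i^{s_i}A_j^{s_j}$ in $W$ or the block $A_j^{s_j}A_i^{s_i}$ in $W_{rev}$ by the permutation from Lemma~\ref{lem:twoperm}, depending on the sign of $C_{ij}-\delta_{ij}(W_{init})$; each such replacement affects only $\delta_{ij}$ and leaves all other $\delta_{uv}$ untouched. In short, the missing idea in your proposal is the palindrome layout that kills the cross terms---once you add that, your Steps~(3) and~(4) go through essentially as written.
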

\begin{proof}
    For an illustration of the proof, see Figure~\ref{fig:morelet}.
    For all $i$, write $\ell_i = 2(K-1) s_{i} + r_{i}$ with $0 \leq r_i < 2(K-1)$.
    Consider the word $W_{init} \coloneqq W_{res} \cdot W \cdot W_{rev}$, where
    \begin{align*}
        W_{res} & \coloneqq A_1^{r_1} A_2^{r_2} \cdots A_K^{r_K}, \\
        W & \coloneqq \left(A_1^{s_1} A_2^{s_2} \right) \left(A_1^{s_1} A_3^{s_3} \right) \cdots \left(A_1^{s_1} A_K^{s_K} \right) \left(A_2^{s_2} A_3^{s_3} \right) \cdots \left(A_2^{s_2} A_K^{s_K} \right) \left(A_3^{s_3} A_4^{s_4} \right) \cdots \left(A_{K-1}^{s_{K-1}} A_K^{s_K} \right), \\
        W_{rev} & \coloneqq \left(A_K^{s_K} A_{K-1}^{s_{K-1}} \right) \left(A_K^{s_K} A_{K-2}^{s_{K-2}} \right) \cdots \left(A_K^{s_K} A_1^{s_1} \right) \left(A_{K-1}^{s_{K-1}} A_{K-2}^{s_{K-2}} \right) \left(A_{K-1}^{s_{K-1}} A_{K-3}^{s_{K-3}} \right) \cdots \left(A_2^{s_2} A_1^{s_1} \right).
    \end{align*}
    In particular, $W$ is the concatenation of all words of the form $A_i^{s_i} A_j^{s_j}$ where $i < j$, and $W_{rev}$ is the reverse of $W$.
    It is easy to verify that $W_{init}$ contains $\ell_i$ occurrences of the letter $A_i$, so its Parikh Image is exactly $\bl$.

    We now compute $\delta_{ij}(W_{init})$ for $i < j$.
    Since $W \cdot W_{rev}$ is a palindrome, we have $\delta_{ij}(W \cdot W_{rev}) = 0$, so
    \begin{align}\label{eq:Winit}
        \delta_{ij}(W_{init}) & = \delta_{ij}(W_{res}) + \PI_i(W_{res}) \PI_j(W \cdot W_{rev}) - \PI_j(W_{res}) \PI_i(W \cdot W_{rev}) \nonumber \\
        & = r_i r_j + r_i \cdot 2(K-1) s_j - r_j \cdot 2(K-1) s_i.
    \end{align}
    In particular, since $0 \leq r_i < 2(K-1)$, we have
    \begin{equation}
        |\delta_{ij}(W_{init})| \leq 4(K-1)^2 + 2(K-1)^2 (s_j + s_i) < 4 K^2 + 2(K-1) (\ell_i + \ell_j)
    \end{equation}

    By Condition~\ref{eq:condCij}, we have
    \begin{align}\label{eq:bounddiff}
        |\delta_{ij}(W_{init}) - C_{ij}| & \leq |\delta_{ij}(W_{init})| + |C_{ij}| \nonumber \\
        & < 4 K^2 + 2(K-1) (\ell_i + \ell_j) + \frac{\ell_i \ell_j}{4K^2} - 2K (\ell_i + \ell_j) - 4K^2 \nonumber \\
        & < \left(\frac{\ell_i}{2(K-1)} - 1\right) \left(\frac{\ell_j}{2(K-1)} - 1\right) \nonumber \\
        & < s_i s_j.
    \end{align}
    Since Equation~\eqref{eq:Winit} yields $\delta_{ij}(W_{init}) \equiv \ell_i \ell_j \mod 2$, Condition~\eqref{eq:condmod} then gives
    \begin{equation}\label{eq:moddiff}
        \delta_{ij}(W_{init}) \equiv C_{ij} \mod 2.
    \end{equation}

    We now show how to construct the word $w$. Starting with the word $W_{init}$, for every pair $i < j$ perform the following:
    \begin{enumerate}
        \item If $\delta_{ij}(W_{init}) > C_{ij}$. By Lemma~\ref{lem:twoperm} there exists a permutation $w_{ij}$ of the word $A_i^{s_i} A_j^{s_j}$ such that
        $
            \delta_{ij}(w_{ij}) = s_i s_j + C_{ij} - \delta_{ij}(W_{init}).
        $
        Indeed, Equations~\eqref{eq:bounddiff} and \eqref{eq:moddiff} guarantee that the conditions~\eqref{eq:condtwoperm} in Lemma~\ref{lem:twoperm} are satisfied.
        We then replace the subword $A_i^{s_i} A_j^{s_j}$ in the $W$-part of $W_{init}$ with the word $w_{ij}$. 
        The resulting new word $W'$ will satisfy
        \[
            \delta_{ij}(W') = \delta_{ij}(W_{init}) - \delta_{ij}(A_i^{s_i} A_j^{s_j}) + \delta_{ij}(w_{ij}) = C_{ij}.
        \]
        This replacement does not change $\delta_{uv}(W_{init})$ for $(u, v) \neq (i, j)$.
        \item If $\delta_{ij}(W_{init}) < C_{ij}$. By Lemma~\ref{lem:twoperm} there exists a permutation $w_{ji}$ of the word $A_j^{s_j} A_i^{s_i}$ such that 
        $
            \delta_{ij}(w_{ji}) = - s_i s_j + C_{ij} - \delta_{ij}(W_{init}).
        $
        Again, Equations~\eqref{eq:bounddiff} and \eqref{eq:moddiff} guarantee that the conditions~\eqref{eq:condtwoperm} in Lemma~\ref{lem:twoperm} are satisfied.
        We then replace the subword $A_j^{s_j} A_i^{s_i}$ in the $W_{rev}$-part of $W_{init}$ with the word $w_{ji}$. 
        The resulting new word $W'$ will satisfy
        \begin{equation*}
            \delta_{ij}(W') = \delta_{ij}(W_{init}) - \delta_{ij}(A_j^{s_j} A_i^{s_i}) + \delta_{ij}(w_{ji}) = C_{ij}.
        \end{equation*}
        This replacement does not change $\delta_{uv}(W_{init})$ for $(u, v) \neq (i, j)$.
        \item If $\delta_{ij}(W_{init}) = C_{ij}$, do not perform any change.
    \end{enumerate}
    Performing all these replacements on $W_{init}$ for all pairs $i < j$ simultaneously, the resulting word $w$ then satisfies $\delta_{ij}(w) = C_{ij}$ for all $1 \leq i < j \leq K$.
\end{proof}

\section{A polynomial time algorithm for Intersection Emptiness}
We prove Theorem~\ref{thm:interempty} in this section.
Let $G$ be a 2-step nilpotent subgroup of $\UT(n, \Q)$.
Let 
\[
    \mG_1 = \{A_{11}, A_{12}, \ldots, A_{1K_1}\}, \ldots, \mG_M = \{A_{M1}, A_{M2}, \ldots, A_{MK_M}\}
\]
be $M$ sets of matrices in $G$. 
The following proposition shows that Intersection Emptiness can be reduced to solving linear Diophantine equations with extra constraints on supports.
The key to obtaining a PTIME algorithm is the fact that these equations are all \emph{homogeneous}.
Hence one can actually solve them over $\Q$, then scale them to obtain integer solutions.

\begin{prop}\label{prop:intertolinalg}
    We have $\langle \mG_1 \rangle \cap \cdots \cap \langle \mG_M \rangle \neq \emptyset$ if and only if there exist non-zero vectors $\bl_1 \in \Zp^{K_1} \setminus \{\bzero\}, \cdots, \bl_M \in \Zp^{K_m} \setminus \{\bzero\}$ as well as rational numbers $c_{mij}$ for $1 \leq m \leq M, i, j \in \supp(\bl_m)$, such that 
    \begin{multline}\label{eq:condinter}
        \sum_{j = 1}^{K_1} \ell_{1j} \log A_{1j} + \sum_{\overset{i < j}{i, j \in \supp(\bl_1)}} c_{1ij} [\log A_{1i}, \log A_{1j}] = \\
        \sum_{j = 1}^{K_2} \ell_{2j} \log A_{2j} + \sum_{\overset{i < j}{i, j \in \supp(\bl_2)}} c_{2ij} [\log A_{2i}, \log A_{2j}] = \cdots \\
        = \sum_{j = 1}^{K_M} \ell_{Mj} \log A_{Mj} + \sum_{\overset{i < j}{i, j \in \supp(\bl_m)}} c_{Mij} [\log A_{Mi}, \log A_{Mj}]        
    \end{multline}
\end{prop}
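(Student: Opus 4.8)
The plan is to prove the two implications separately. The ``only if'' direction is a direct unwinding of the Baker--Campbell--Hausdorff formula: if $B \in \langle\mG_1\rangle\cap\cdots\cap\langle\mG_M\rangle$, fix for each $m$ a nonempty word $w_m$ over $\mG_m$ evaluating to $B$, and set $\bl_m := \PI^{\mG_m}(w_m)\in\Zp^{K_m}\setminus\{\bzero\}$. If $i\notin\supp(\bl_m)$ then $A_{mi}$ does not occur in $w_m$, so $\ell_{mi}=0$ and $\delta^{\mG_m}_{ij}(w_m)=0$ for all $j$; hence in the expansion~\eqref{eq:BCHcomb} of $\log w_m$ only the terms with $i,j\in\supp(\bl_m)$ survive. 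Taking $c_{mij}:=\tfrac12\delta^{\mG_m}_{ij}(w_m)\in\Q$ for $i<j$ in $\supp(\bl_m)$, the equalities $\log w_1=\cdots=\log w_M=\log B$ are precisely the chain~\eqref{eq:condinter}.

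For the ``if'' direction, assume we are given $\bl_1,\dots,\bl_M$ and rationals $c_{mij}$ satisfying~\eqref{eq:condinter}, and let $D$ be the common value of the $M$ expressions there. Write $S_m:=\supp(\bl_m)$, which is nonempty, so $\ell_{mj}\ge 1$ for $j\in S_m$. Choose $N\in\Zpp$ with $Nc_{mij}\in\Z$ for all $m,i,j$, and let $t$ be a positive even multiple of $N$, taken large enough for the estimates below. For each $m$ I build a nonempty word $w_m$ over $\mG_m$ with $\PI^{\mG_m}(w_m)=t\bl_m$ and $\delta^{\mG_m}_{ij}(w_m)=2tc_{mij}$ for all $i<j$ in $S_m$; since $w_m$ will use only letters indexed by $S_m$, automatically $\delta^{\mG_m}_{ij}(w_m)=0$ whenever $i\notin S_m$ or $j\notin S_m$. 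If $|S_m|=1$, say $S_m=\{j\}$, take $w_m:=A_{mj}^{t\ell_{mj}}$. If $|S_m|\ge 2$, apply Proposition~\ref{prop:nperm} to the alphabet $\{A_{mj}\mid j\in S_m\}$ with target Parikh image $(t\ell_{mj})_{j\in S_m}$ and targets $C_{ij}:=2tc_{mij}$: the parity condition~\eqref{eq:condmod} holds because $2tc_{mij}$ is even while $t$ even makes $(t\ell_{mi})(t\ell_{mj})$ even; the magnitude condition~\eqref{eq:condCij} holds once $t$ is large, since its left side $|2tc_{mij}|$ grows linearly in $t$ whereas its right side is quadratic in $t$ with positive leading coefficient $\tfrac{\ell_{mi}\ell_{mj}}{4|S_m|^2}$. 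Only finitely many inequalities occur, so a single $t$ works for all of them.

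It remains to check that all $w_m$ evaluate to the same matrix. Expanding via~\eqref{eq:BCHcomb} and using $\tfrac12\cdot 2tc_{mij}=tc_{mij}$,
\[
\log w_m \;=\; t\Bigl(\sum_{j=1}^{K_m}\ell_{mj}\log A_{mj} \;+\; \sum_{\substack{i<j\\ i,j\in S_m}} c_{mij}[\log A_{mi},\log A_{mj}]\Bigr) \;=\; tD
\]
for every $m$, the inner parenthesized expression being exactly the $m$-th term of~\eqref{eq:condinter}. Hence $w_1,\dots,w_M$ all evaluate to $\exp(tD)$, which therefore lies in $\langle\mG_1\rangle\cap\cdots\cap\langle\mG_M\rangle$, so the intersection is non-empty.

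The main obstacle is the ``if'' direction, specifically realizing subword-difference values $\delta^{\mG_m}_{ij}(w_m)$ that absorb the mismatch between the linear parts $\sum_j\ell_{mj}\log A_{mj}$ for different $m$, even though these $\delta$-values a priori range over a much smaller set than the free rationals $c_{mij}$. The device is to force $\delta^{\mG_m}_{ij}(w_m)=2tc_{mij}$, which makes \emph{every} bracket coefficient of $\log w_m-tD$ vanish identically and so sidesteps any analysis of linear dependences among the brackets $[\log A_{mi},\log A_{mj}]$; this is affordable precisely because, as in the discussion preceding Proposition~\ref{prop:nperm}, scaling the Parikh image by $t$ widens the admissible window for the $\delta_{ij}$ quadratically in $t$ while the prescribed targets grow only linearly.
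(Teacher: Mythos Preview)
Your proof is correct and follows essentially the same approach as the paper: the ``only if'' direction unwinds the BCH formula, and the ``if'' direction exploits homogeneity of~\eqref{eq:condinter} to scale by a large even factor $t$, then invokes Proposition~\ref{prop:nperm} via the quadratic-versus-linear growth argument. Your treatment is in fact slightly more careful than the paper's in that you explicitly restrict to the sub-alphabet indexed by $S_m=\supp(\bl_m)$ and handle $|S_m|=1$ separately, which cleanly sidesteps the issue that Proposition~\ref{prop:nperm} cannot be applied directly with zero entries in the Parikh image.
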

\begin{proof}
If $\langle \mG_1 \rangle \cap \cdots \cap \langle \mG_M \rangle \neq \emptyset$, let $g$ be an element in the intersection.
There exist non-empty words $w_1, \ldots, w_m$ over the alphabets $\mG_1, \ldots, \mG_M$ such that $\log g = \log w_1 = \cdots = \log w_m$.
By the BCH formula~\eqref{eq:BCHcomb},
\[
\log g = \sum_{j = 1}^{K_i} \PI_i(w_m) \log A_{mj} + \sum_{\overset{i < j}{i, j \in \supp(\bl_m)}} \frac{\delta_{ij}(w_m)}{2} [\log A_{mi}, \log A_{mj}], \quad \text{ for } m = 1, \ldots, M.
\]
This shows that \eqref{eq:condinter} is satisfied by $\bl_m \coloneqq \PI^{\mG_m}(w_m)$ and $c_{mij} \coloneqq \delta_{ij}(w_m)/2$ for $1 \leq m \leq M, i, j \in \supp(\bl_m)$.

For the other implication, suppose such non-zero vectors $\bl_1, \ldots, \bl_M$ and the rational numbers $c_{mij}$ exist.
Then there exists $g \in G$ such that
\begin{equation}\label{eq:g}
\sum_{j = 1}^{K_1} \ell_{mj} \log A_{mj} + \sum_{\overset{i < j}{i, j \in \supp(\bl_m)}} \frac{2c_{mij}}{2} [\log A_{mi}, \log A_{mj}] = \log g, \quad \text{ for } m = 1, \ldots, M.
\end{equation}
Note that if $i, j \in \supp(\bl_m)$ then $\ell_{mi} \ell_{mj} \neq 0$.

By homogeneity, for any $N \in \Zpp$, the vectors $N\bl_1, \ldots, N\bl_M$ and $Nc_{mij}$ also satisfy Condition~\eqref{eq:condinter}.
Hence, multiplying all $\ell_{ij}$, $c_{mij}$ and $\log g$ by a common denominator, we can suppose all $\ell_{ij}$ and $c_{mij}$ to be integers.
Denote $K =  \max_{1 \leq m \leq M} K_m$, then there exists a large enough \emph{even} integer $N \in \Zpp$ such that
\begin{equation}\label{eq:quadN}
    |N \cdot 2 c_{mij}| \leq \frac{N^2 \ell_{mi} \ell_{mj}}{4 K^2} - 2NK(\ell_i + \ell_j) - 4K^2
\end{equation}
for $1 \leq m \leq M, i, j \in \supp(\bl_m)$.
This is because $\ell_{mi} \ell_{mj} > 0$, so the right hand side of \eqref{eq:quadN} is quadratic and dominates the linear term on the left for large enough $N$. 
Replace all $\ell_{ij}$ with $N\ell_{ij}$, all $c_{mij}$ with $N c_{mij}$, and $\log g$ with $N\log g$, then the new variables satisfy $2c_{mij} \equiv 0 \equiv \ell_{mi} \ell_{mj} \mod 2$, and
\begin{equation}
    |2c_{mij}| \leq \frac{\ell_{mi} \ell_{mj}}{4 K^2} - 2K(\ell_i + \ell_j) - 4K^2 \leq \frac{\ell_{mi} \ell_{mj}}{4 K_m^2} - 2K_m(\ell_i + \ell_j) - 4K_m^2
\end{equation}
for all $i, j, m$.
Equation~\eqref{eq:g} is still satisfied after the variable replacements.
Therefore, by Proposition~\ref{prop:nperm}, there exist words $w_1, \ldots, w_M$ over the alphabets $\mG_1, \ldots, \mG_M$ such that $\PI(w_m) = \bl_m$ and $\delta_{ij}(w_m) = 2c_{mij}$ for all $1 \leq m \leq M, i, j \in \supp(\bl_m)$.
These words are non-empty since $\bl_m \neq \bzero$.
Plugging into the BCH formula~\eqref{eq:BCHcomb}, we have
\[
\log w_m = \sum_{j = 1}^{K_m} \ell_{mj} \log A_{mj} + \sum_{\overset{i < j}{i, j \in \supp(\bl_m)}} \frac{2c_{mij}}{2} [\log A_{mi}, \log A_{mj}] = \log g \quad \text{ for } m = 1, \ldots, M.
\]
This shows that $\log g \in \bigcap_{i=1}^M \log \langle \mG_i \rangle \neq \emptyset$.
\end{proof}

Using Proposition~\ref{prop:intertolinalg}, we devise Algorithm~\ref{alg:inter} that decides Intersection Emptiness.

\begin{algorithm}[!ht]
\caption{Algorithm for Intersection Emptiness}
\label{alg:inter}
\begin{description} 
\item[Input:] 
$M$ finite sets of matrices $\mG_1 = \{A_{11}, A_{12}, \ldots, A_{1K_1}\}, \ldots, \mG_M = \{A_{M1}, A_{M2}, \ldots, A_{MK_M}\}$ in the group $G$.
\item[Output:] \textbf{True} (intersection is empty) or \textbf{False} (intersection is not empty).
\end{description}
\begin{enumerate}[Step 1:]
    \item \textbf{Initialization.}
    Set $S_1 \coloneqq \{1, 2, \ldots, K_1\}, \ldots, S_M \coloneqq \{1, 2, \ldots, K_M\}$.
    \item\label{step:intermainloop} \textbf{Main loop.} Repeat the following
    \begin{enumerate} 
        \item Represent the $\Q$-linear subspace of $V \coloneqq \Q^{\sum_{m = 1}^M K_m + \sum_{m = 1}^M \card(S_m)(\card(S_m) - 1)/2}$: 
        \begin{multline}\label{eq:defW}        
            W \coloneqq \bigg\{\big((\ell_{mj})_{1 \leq m \leq M, 1 \leq j \leq K_m}, (c_{mij})_{1 \leq m \leq M, i, j \in S_m} \big) \in V \;\bigg| \\
             \sum_{j = 1}^{K_1} \ell_{1j} \log A_{1j} + \sum_{\overset{i < j}{i, j \in S_1}} c_{1ij} [\log A_{1i}, \log A_{1j}] = \cdots \\
             = \sum_{j = 1}^{K_M} \ell_{Mj} \log A_{Mj} + \sum_{\overset{i < j}{i, j \in S_M}} c_{Mij} [\log A_{Mi}, \log A_{Mj}] \bigg\}
        \end{multline}
        as the solution set of homogeneous linear equations.
        \item Compute the projection of $W$ onto the coordinates $(\ell_{mj})_{1 \leq m \leq M, 1 \leq j \leq K_m}$:
        \begin{multline}\label{eq:defpiW}            
            \pi_{\bl}(W) \coloneqq \bigg\{(\ell_{mj})_{1 \leq m \leq M, 1 \leq j \leq K_m} \in \Q^{\sum_{m = 1}^M K_m} \;\bigg|\; \exists (c_{mij})_{1 \leq m \leq M, i, j \in S_m}, \\
             \big((\ell_{mj})_{1 \leq m \leq M, 1 \leq j \leq K_m}, (c_{mij})_{1 \leq m \leq M, i, j \in S_m} \big) \in W \bigg\}
        \end{multline}
        represented as the solution set of homogeneous linear equations.
        \item Define $\Lambda \coloneqq \Zp^{\sum_{m = 1}^M K_m} \cap \pi_{\bl}(W)$ and compute $\supp(\Lambda)$ using Lemma~\ref{lem:compsupp}.
        \item If $\supp(\Lambda) \cap S_m = S_m$ for all $1 \leq m \leq M$, terminate the loop and go to Step~\ref{step:interoutput}.
        Otherwise, let $S_m \coloneqq \supp(\Lambda) \cap S_m$ for every $m$, and continue with Step~\ref{step:intermainloop}.
    \end{enumerate}
    \item\label{step:interoutput} \textbf{Output.} 
    \vspace{-4mm}
    \begin{enumerate} 
    \item If $S_m = \emptyset$ for any $1 \leq m \leq M$, return \textbf{True}.
    \item Otherwise return \textbf{False}.
    \end{enumerate}
\end{enumerate}
\end{algorithm}

\thminterempty*
\begin{proof}
    Theorem~\ref{thm:interempty} follows from the correctness and polynomial time complexity of Algorithm~\ref{alg:inter}.
    Their proofs are given in Appendix~\ref{app:proofs}, Proposition~\ref{prop:alg1}.
\end{proof}

\section{Decidability of Orbit Intersection in $\HH_3(\Q)$}\label{sec:shiftinter}
We prove Theorem~\ref{thm:shiftinter} in this section.
Let $\mG$ and $\mH$ be finite sets of matrices in the group $\HH_3(\Q)$, and $T, S$ be matrices in $\HH_3(\Q)$.
Our goal is to decide whether $T \cdot \langle \mG \rangle \cap S \cdot \langle \mH \rangle = \emptyset$.
Multiplying both $T \cdot \langle \mG \rangle$ and $S \cdot \langle \mH \rangle$ on the left by $T^{-1}$, one can without loss of generality suppose $T = I$.
That is, it suffices to consider the problem of deciding whether $\langle \mG \rangle \cap S \cdot \langle \mH \rangle = \emptyset$.
Denote by $\varphi \colon \log \HH_3(\Q) \rightarrow \Q^2$ the projection onto the superdiagonal, and by $\pi \colon \log \HH_3(\Q) \rightarrow \Q$ the projection onto the upper right entry:
\[
\varphi \colon
\begin{pmatrix}
    0 & a & c \\
    0 & 0 & b \\
    0 & 0 & 0
\end{pmatrix}
\mapsto (a, b);
\quad \quad
\pi \colon
\begin{pmatrix}
    0 & a & c \\
    0 & 0 & b \\
    0 & 0 & 0
\end{pmatrix}
\mapsto c.\]
One easily verifies that for matrices $X, Y \in \HH_3(\Q)$, we have $[\log X, \log Y] = 0$ if and only if $\varphi(\log X)$ and $\varphi(\log Y)$ are linearly dependant. 
Define the cones
\[
    \mC_{\mG} \coloneqq \langle \varphi(\log \mG) \rangle_{\Qp}, \quad
    \mC_{\mH} \coloneqq \langle \varphi(\log \mH) \rangle_{\Qp}.
\]

\subsection{Easy case: The cone $\mC_{\mG} \cap \mC_{\mH}$ has dimension zero or one}\label{subsec:easy}

The situation in this case is similar to the one discussed in~\cite[Section~3, Case I]{colcombet2019reachability}.

\begin{restatable}{prop}{propeasy}\label{prop:easy}
    Suppose the cone $\mC_{\mG} \cap \mC_{\mH}$ has dimension zero or one.
    Deciding whether $\langle \mG \rangle \cap S \cdot \langle \mH \rangle \neq \emptyset$ can be done by solving finitely many linear Diophantine equations.
\end{restatable}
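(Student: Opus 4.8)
\emph{Proof strategy.}
The plan is to decide whether there exist a nonempty word $v$ over $\mG$ and a nonempty word $w$ over $\mH$ with $\log v = \log(Sw)$, which I would analyse by projecting this identity to the superdiagonal via $\varphi$ and to the upper right corner via $\pi$. Write $\bx = \PI^{\mG}(v)$ and $\by = \PI^{\mH}(w)$, and denote the generators by $A_i\in\mG$, $B_i\in\mH$. Since $\log(Sw) = \log S + \log w + \frac{1}{2}[\log S,\log w]$ and every Lie bracket in $\log\HH_3(\Q)$ is central, applying $\varphi$ (and the BCH formula~\eqref{eq:BCHcomb}) kills all bracket terms and leaves the purely linear equation
\[
    \sum_i x_i\,\varphi(\log A_i) \;=\; \varphi(\log S) + \sum_i y_i\,\varphi(\log B_i)
\]
in $\Q^2$, whereas applying $\pi$ leaves one scalar equation in which the brackets enter only through the integers $\delta^{\mG}_{ij}(v)$, $\delta^{\mH}_{ij}(w)$, each scaled by a fixed rational. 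So the task becomes: find $\bx\in\Zp^{|\mG|}\setminus\{\bzero\}$, $\by\in\Zp^{|\mH|}\setminus\{\bzero\}$, together with values of the $\delta$'s attainable by some words with these Parikh vectors, satisfying the two equations.

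The heart of the matter is that the degeneracy of $\mC_{\mG}\cap\mC_{\mH}$ makes the bracket terms \emph{tame}. If one of $\mC_{\mG}, \mC_{\mH}$ is at most one-dimensional, then the $\varphi$-images of that generating set are collinear, so by the criterion recalled above all brackets on that side vanish and its $\log$ is affine-linear in the Parikh vector. Otherwise both cones are two-dimensional (hence neither can be all of $\Q^2$, as that would put the other into the previous case), and I would use their degenerate intersection geometrically: if $\mC_{\mG}\cap\mC_{\mH}=\{\bzero\}$ then the superdiagonal vector $\bv := \varphi(\log v)$ of any witness is bounded, since an unbounded family of such $\bv$ (lying in $\mC_{\mG}$ with $\bv-\varphi(\log S)\in\mC_{\mH}$) would have a limiting direction common to both cones; and if $\mC_{\mG}\cap\mC_{\mH}$ is one-dimensional then there is a linear functional $f$ that is nonnegative on $\mC_{\mG}$, nonpositive on $\mC_{\mH}$, and vanishes on $\mC_{\mG}\cap\mC_{\mH}$, and applying $f$ to the displayed equation bounds $\sum_i x_i f(\varphi(\log A_i))$ by the constant $f(\varphi(\log S))$, so every generator with $f(\varphi(\cdot))\neq 0$ occurs only boundedly often. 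In every case this forces the generators occurring with unbounded multiplicity to have collinear $\varphi$-images; hence each surviving bracket $[\log A_i,\log A_j]\neq 0$ (resp.\ $[\log B_i,\log B_j]\neq 0$) involves one of the boundedly many ``slow'' generators, so the corresponding $\delta_{ij}$ ranges over an interval of length merely \emph{linear} in $|\bx|$ (resp.\ $|\by|$), while on a two-dimensional side whose partner is commutative the $\delta$'s retain a \emph{quadratic} range, which only helps.

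To finish, I would enumerate the finitely many possibilities for which slow generators occur, and with what (bounded) multiplicities, in $v$ and in $w$. For each, using Lemma~\ref{lem:twoperm} to insert the slow-generator tokens into the long blocks of the remaining collinear letters — and Proposition~\ref{prop:nperm} on any commutative-partner side to exploit its quadratic range — one checks that the set of attainable values of $\pi(\log v)$, for fixed $\bx$, is an arithmetic progression whose endpoints are affine-linear in $\bx$, and symmetrically for $\pi(\log(Sw))$ in $\by$. The remaining constraints are then the linear equation above, linear inequalities asserting that the two progressions overlap, and a congruence asserting that the overlap meets the right residue class (automatic once the overlap is long, and otherwise splitting into finitely many further cases). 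All of these are linear Diophantine (with congruences), so the problem reduces to the feasibility of finitely many such systems, in the spirit of the analogous case of~\cite{colcombet2019reachability}.

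I expect the main obstacle to be exactly this case analysis on the shapes of the planar cones $\mC_{\mG}, \mC_{\mH}$ and the attendant bookkeeping: one must check in each configuration that the slow generators are precisely the ones that keep the bracket contributions sub-quadratic, and that the attainable sets of $\pi$-values are genuinely arithmetic progressions with linear endpoints rather than something more complicated. The degeneracy hypothesis is what keeps the whole reduction inside the \emph{linear} (rather than genuinely quadratic) Diophantine world, which is why the case where $\mC_{\mG}\cap\mC_{\mH}$ is two-dimensional must be treated by a different method.
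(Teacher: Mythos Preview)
Your overall strategy is right and matches the paper's: pick a line $\mL\supseteq\mC_{\mG}\cap\mC_{\mH}$ and a normal functional to show that generators whose $\varphi$-image lies off $\mL$ (the ``slow'' ones) occur with bounded multiplicity in any witness, while the remaining ``fast'' generators have collinear $\varphi$-images and hence pairwise commute.

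Where you diverge is in the next step. You propose to fix the multiset of slow tokens and then, for each Parikh image $\bx$, analyse the set of attainable values of $\pi(\log v)$ as an arithmetic progression with affine-linear endpoints, reducing to linear inequalities plus congruences. The paper takes a more direct route: it enumerates the \emph{ordered sequence} $C_1,\ldots,C_s\in\mG_+$ of slow tokens as they appear in $v$ (and $D_1,\ldots,D_t\in\mH_+$ in $w$); since $s,t$ are bounded this is still finitely many cases. Writing $v=v_0\,C_1\,v_1\cdots C_s\,v_s$ with each $v_i$ a word over the commuting alphabet $\mG_0$, the BCH formula expresses $\log v$ \emph{linearly} in the block-Parikh variables $x_{ij}:=\PI^{\mG_0}_j(v_i)$: fast--fast brackets vanish, slow--slow brackets are constants fixed by the chosen sequence, and each slow--fast bracket appears with a coefficient that is a $\pm$-signed sum of the $x_{ij}$. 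Equating with the analogous expression for $\log(Sw)$ gives one honest linear Diophantine system in the $x_{ij},y_{ij}$ per choice of sequences --- no inequalities, no congruences, and no appeal to Lemma~\ref{lem:twoperm} or Proposition~\ref{prop:nperm} at all.

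This matters because your arithmetic-progression claim is actually false. Take one slow token $A_1$ and two fast types $A_2,A_3$ with $\varphi(\log A_3)=5\,\varphi(\log A_2)$, so that $\pi([\log A_1,\log A_3])=5\,\pi([\log A_1,\log A_2])$. For Parikh image $(1,2,2)$ the values $\delta_{12},\delta_{13}$ range independently over $\{-2,0,2\}$, and the bracket contribution to $\pi(\log v)$ is proportional to $\delta_{12}+5\delta_{13}\in\{\pm 12,\pm 10,\pm 8,\pm 2,0\}$, which omits $\pm 4,\pm 6$ and is not an arithmetic progression. This is patchable (pass to a finite union of progressions, or brute-force small Parikh images), but the paper's ordered-sequence device makes the whole attainability analysis unnecessary: once the positions of the slow tokens are pinned down, $\pi(\log v)$ is simply a linear function of the block variables, and there is nothing left to ``attain''.
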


\subsection{Hard case: The cone $\mC_{\mG} \cap \mC_{\mH}$ has dimension two}\label{subsec:hard}

We have $\langle \mG \rangle \cap S \cdot \langle \mH \rangle \neq \emptyset$ if and only if there exist words $v$ in the alphabet $\mG$ and $w$ in the alphabet $\mH$ such that $\log v = \log Sw$.
Let $\bx = (x_1, \ldots, x_K)$ be the Parikh Image of $v$, and $\by = (y_1, \ldots, y_M)$ be the Parikh Image of $w$.
By the BCH formula~\eqref{eq:BCHraw} and \eqref{eq:BCHcomb}, $\log v = \log S w$ is equivalent to
\begin{multline}\label{eq:main}
    \sum_{i=1}^K x_i \log A_i + \frac{1}{2} \sum_{1 \leq i < j \leq K} \delta^{\mG}_{ij}(v) [\log A_i, \log A_j] = \\
    \log S + \sum_{i=1}^M y_i (\log B_i + \frac{1}{2}[\log S, \log B_i]) + \frac{1}{2}\sum_{1 \leq i < j \leq M} \delta^{\mH}_{ij}(w) [\log B_i, \log B_j]
\end{multline}
The following proposition shows that it suffices to solve a relaxed version of Equation~\eqref{eq:main}.

\begin{prop}\label{prop:shifttolinalg}
    Suppose the cone $\mC_{\mG} \cap \mC_{\mH}$ has dimension two.
    We have $\langle \mG \rangle \cap S \cdot \langle \mH \rangle \neq \emptyset$ if and only if there exists integers $x_i, 1 \leq i \leq K$ and $y_j , 1 \leq j \leq M$ and $c_{ij} , 1 \leq i < j \leq K$ and $d_{ij}, 1 \leq i < j \leq M$, satisfying
    \begin{equation}\label{eq:condshiftlin}
        \sum_{i=1}^K x_i \varphi(\log A_i) = \varphi(\log S) + \sum_{i=1}^M y_i \varphi(\log B_i),
    \end{equation}    
    \begin{multline}\label{eq:condshiftquad}
        \sum_{i=1}^K x_i \pi(\log A_i) + \frac{1}{2} \sum_{1 \leq i < j \leq K} c_{ij} \pi([\log A_i, \log A_j]) = \\
        \pi(\log S) + \sum_{i=1}^M y_i \pi(\log B_i + \frac{1}{2}[\log S, \log B_i]) + \frac{1}{2}\sum_{1 \leq i < j \leq M} d_{ij} \pi([\log B_i, \log B_j])
    \end{multline}
    and
    \begin{equation}\label{eq:condshiftmod}
        c_{ij} \equiv x_i x_j \mod 2, \quad 1 \leq i < j \leq K; \quad\quad
        d_{ij} \equiv y_i y_j \mod 2, \quad 1 \leq i < j \leq M.
    \end{equation}
\end{prop}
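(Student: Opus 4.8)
The plan is to prove the two directions separately, with the hard direction mirroring the strategy of Proposition~\ref{prop:intertolinalg}. The forward direction is immediate: if $\langle \mG \rangle \cap S \cdot \langle \mH \rangle \neq \emptyset$, pick words $v$ over $\mG$ and $w$ over $\mH$ realizing the intersection, so that Equation~\eqref{eq:main} holds with $\bx = \PI^{\mG}(v)$, $\by = \PI^{\mH}(w)$. Setting $c_{ij} \coloneqq \delta^{\mG}_{ij}(v)$ and $d_{ij} \coloneqq \delta^{\mH}_{ij}(w)$, applying $\varphi$ to \eqref{eq:main} kills all the bracket terms (since $\varphi([\log X,\log Y]) = 0$ and $\varphi(\log S w) = \varphi(\log S) + \sum_i y_i\varphi(\log B_i)$, as $\varphi$ is a homomorphism on the abelianization), yielding \eqref{eq:condshiftlin}; applying $\pi$ yields \eqref{eq:condshiftquad}; and the parity constraints~\eqref{eq:condshiftmod} hold by Equation~\eqref{eq:mod2}.

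For the converse, suppose we are given integers $x_i, y_j, c_{ij}, d_{ij}$ satisfying \eqref{eq:condshiftlin}--\eqref{eq:condshiftmod}. The idea is to scale the solution so that the quadratic "reachable radius" of Proposition~\ref{prop:nperm} dominates, then build words $v, w$ with the prescribed Parikh images and $\delta$-values. Here is where the dimension-two hypothesis on $\mC_{\mG} \cap \mC_{\mH}$ enters: unlike in Intersection Emptiness, Equation~\eqref{eq:main} is \emph{not} homogeneous (the $\log S$ term and the cross terms $y_i[\log S,\log B_i]$ are degree $\leq 1$), so we cannot simply rescale. Instead I would first use the dimension-two assumption to find, in the solution space of \eqref{eq:condshiftlin}, a "homogeneous" direction: there exist $\bx^{0} \in \Zp^K\setminus\{\bzero\}$, $\by^{0}\in\Zp^M\setminus\{\bzero\}$ with $\sum x^{0}_i\varphi(\log A_i) = \sum y^{0}_i\varphi(\log B_i)$ lying in the relative interior of the two-dimensional cone $\mC_{\mG}\cap\mC_{\mH}$ — so that the supports of $\bx^0$ and $\by^0$ can be taken as large as the geometry allows. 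Adding a large multiple $N\bx^{0}, N\by^{0}$ to the given solution $\bx, \by$ (and adjusting the $c_{ij}, d_{ij}$ by the resulting cross terms and picking up parity corrections via Lemma~\ref{lem:twoperm}-type adjustments) keeps \eqref{eq:condshiftlin} and, after re-solving the scalar equation \eqref{eq:condshiftquad} for the updated $c_{ij}, d_{ij}$, keeps that too; but now every coordinate $\ell_i$ in the support can be made $\gg 0$, so for $N$ large the bound~\eqref{eq:condCij} of Proposition~\ref{prop:nperm} is satisfied for both alphabets. Then Proposition~\ref{prop:nperm} produces words $v$ over $\mG$ and $w$ over $\mH$ with $\PI(v) = \bx$, $\PI(w) = \by$, $\delta^{\mG}_{ij}(v) = c_{ij}$, $\delta^{\mH}_{ij}(w) = d_{ij}$; combining the $\varphi$- and $\pi$-components shows $\log v = \log S w$ via the BCH formula, so $v \in S\cdot w \cdot$ hence $\langle\mG\rangle \cap S\langle\mH\rangle \neq \emptyset$.

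Two technical points need care. First, Proposition~\ref{prop:nperm} requires the off-diagonal targets $C_{ij}$ to be \emph{non-negative}; here $c_{ij}$ and $d_{ij}$ can be negative, so one must apply the standard trick (as implicitly used around Equation~\eqref{eq:quadN}) of absorbing a large even multiple of $\ell_i\ell_j$ to shift $c_{ij}$ into $\Zp$, which is harmless since adding an even amount to $\delta_{ij}$ corresponds to a permutation and the parity is already matched. Second, and this is the main obstacle, one must ensure that after enlarging by $N\bx^0, N\by^0$ the scalar equation~\eqref{eq:condshiftquad} can still be solved \emph{over the integers} in the variables $c_{ij}, d_{ij}$ while simultaneously respecting the parity constraints~\eqref{eq:condshiftmod} with the updated $x_i x_j$ and $y_i y_j$ — i.e.\ the single linear equation \eqref{eq:condshiftquad} must have enough free bracket-variables $\pi([\log A_i,\log A_j])$ or $\pi([\log B_i,\log B_j])$ that are non-zero. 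This is exactly where dimension two of $\mC_{\mG}\cap\mC_{\mH}$ is used a second time: it guarantees that within the enlarged support there is a pair $i<j$ with $\varphi(\log A_i), \varphi(\log A_j)$ linearly independent, hence $\pi([\log A_i,\log A_j]) \neq 0$, giving a free variable to absorb the right-hand side of \eqref{eq:condshiftquad} with arbitrary parity. Handling the degenerate sub-cases (e.g.\ when all of $\mG$ or all of $\mH$ lands on a line, or when $S$ is central) must be folded into the dimension-zero-or-one case already covered by Proposition~\ref{prop:easy}, so the dimension-two hypothesis is exactly what makes this argument go through.
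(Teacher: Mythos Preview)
Your proposal is correct and follows essentially the same route as the paper: the forward direction is identical, and for the converse both you and the paper (i) use the two-dimensionality of $\mC_{\mG}\cap\mC_{\mH}$ to find a strictly positive homogeneous direction $(X_i),(Y_j)$ satisfying $\sum X_i\varphi(\log A_i)=\sum Y_j\varphi(\log B_j)$, (ii) shift $(x_i,y_j)$ by a large multiple of this direction while simultaneously adjusting $c_{ij},d_{ij}$ linearly in $N$ so that \eqref{eq:condshiftlin}--\eqref{eq:condshiftmod} remain valid, and (iii) invoke Proposition~\ref{prop:nperm} once the quadratic growth of $x'_ix'_j$ dominates the linear growth of $c'_{ij}$. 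The paper is simply more explicit: it writes down closed formulas for $x'_i,y'_j,c'_{ij},d'_{ij}$ (with a carefully chosen scaling factor $2NDE$ that makes all shifts integral and even), whereas you describe the adjustment as ``re-solving the scalar equation'' using a nonzero bracket as a free variable.

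One small correction: your first ``technical point'' is a misreading. The ``$\Zp^{K(K-1)/2}$'' in the statement of Proposition~\ref{prop:nperm} is a typo for $\Z^{K(K-1)/2}$ --- the bound $|C_{ij}|\le\cdots$ and the proof itself (which treats both signs via the cases $\delta_{ij}(W_{\mathrm{init}})\gtrless C_{ij}$) make clear that negative $C_{ij}$ are permitted. Your proposed fix of ``absorbing a large even multiple of $\ell_i\ell_j$ to shift $c_{ij}$ into $\Zp$'' does not actually make sense (it would break Equation~\eqref{eq:condshiftquad}) and is not what happens around Equation~\eqref{eq:quadN}; fortunately no fix is needed, so this does not affect the validity of your main argument.
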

\begin{proof}
    If $\langle \mG \rangle \cap s \cdot \langle \mH \rangle \neq \emptyset$, then let $v, w$ be non-empty words over the respectively alphabets $\mG$ and $\mH$, such that $\log v = \log S w$.
    Let $c_{ij} \coloneqq \delta^{\mG}_{ij}(v)$ and $d_{ij} \coloneqq \delta^{\mH}_{ij}(w)$ for all $i, j$.
    Since Equation~\eqref{eq:main} is satisfied, projecting it under $\varphi$ and $\pi$ gives respectively \eqref{eq:condshiftlin} and \eqref{eq:condshiftquad}.
    The parity condition~\eqref{eq:condshiftmod} is obviously due to Equation~\eqref{eq:mod2}.
    Hence we have found the integers $x_i, y_j, c_{ij}, d_{ij}$ satisfying Equations~\eqref{eq:condshiftlin}, \eqref{eq:condshiftquad} and \eqref{eq:condshiftmod}.

    On the other hand, let $x_i, y_j, c_{ij}, d_{ij}$ be integers that satisfy Equations~\eqref{eq:condshiftlin}, \eqref{eq:condshiftquad} and \eqref{eq:condshiftmod}.
    Since $\mC_{\mG}$ and $\mC_{\mH}$ have dimension two, the commutators $[\log A_i, \log A_j]$ and $[\log B_i, \log B_j]$ are not all zero (since $\varphi(A_i)$ are not all linearly dependant, same for $\varphi(B_i)$).
    Hence, there exist integers $C_{ij}, D_{ij}$ such that
    \[
    D \coloneqq \sum_{1 \leq i < j \leq K} C_{ij} \pi([\log A_i, \log A_j]) + \sum_{1 \leq i < j \leq M} D_{ij} \pi([\log B_i, \log B_j]) \in \Qpp.
    \]
    Denote by $E$ the common denominator of all the entries of the matrices $\log A_i$, $\log B_i$, $\log S$, $\frac{1}{2}[\log S, \log B_i]$, $\frac{1}{2}[\log A_i, \log A_j]$ and $\frac{1}{2}[\log B_i, \log B_j]$.
    In particular, $DE$ is an integer.
    
    Since the cone $\mC_{\mG} \cap \mC_{\mH}$ has dimension two, there exist \emph{strictly positive} integers $X_1, \ldots, X_K$ and $Y_1, \ldots, Y_M$, such that
    \begin{equation}\label{eq:XY}
        \sum_{i=1}^K X_i \varphi(\log A_i) = \sum_{i=1}^M Y_i \varphi(\log B_i).
    \end{equation}
    This is because, taking $\bv$ to be a vector in the \emph{interior} of $\mC_{\mG} \cap \mC_{\mH}$ (i.e.\ $\bv$ admits an open neighbourhood contained in $\mC_{\mG} \cap \mC_{\mH}$), then $\bv$ is in the interior of both $\mC_{\mG}$ and $\mC_{\mH}$.
    Hence, there exist strictly positive rational numbers $X'_1, \ldots, X'_K$ and $Y'_1, \ldots, Y'_M$, such that 
    \[
        \sum_{i=1}^K X'_i \varphi(\log A_i) = \bv = \sum_{i=1}^M Y'_i \varphi(\log B_i).
    \]
    Multiplying $X'_1, \ldots, X'_K$ and $Y'_1, \ldots, Y'_M$ by their common denominator gives positive integers satisfying Equation~\eqref{eq:XY}.
    
    For any $N \in \Zpp$, the integers $x_i, y_i, c_{ij}, d_{ij}$ can be replaced by the integers 
    \begin{align*}
        x'_i & \coloneqq x_i + 2 N DE X_i \\
        y'_i & \coloneqq y_i + 2 N DE Y_i \\
        c'_{ij} & \coloneqq c_{ij} - 4 NE C_{ij} \left(\sum_{k = 1}^K X_k \pi(\log A_k) - \sum_{k = 1}^M Y_k \pi(\log B_k + \frac{1}{2}[\log S, \log B_k])\right) \\
        d'_{ij} & \coloneqq d_{ij} + 4 NE D_{ij} \left(\sum_{k = 1}^K X_k \pi(\log A_k) - \sum_{k = 1}^M Y_k \pi(\log B_k + \frac{1}{2}[\log S, \log B_k])\right)
    \end{align*}
    for all $i, j$, while still satisfying Equations~\eqref{eq:condshiftlin}, \eqref{eq:condshiftquad} and \eqref{eq:condshiftmod}.
    Furthermore, when $N$ is large enough, we have
    \begin{align}\label{eq:boundx}
        &x'_i > 0, y'_j > 0, \quad 1 \leq i \leq K, 1 \leq j \leq M, \\
    \label{eq:boundc}
        &|c'_{ij}| \leq \frac{x'_i x'_j}{4K^2} - 2K (x'_i + x'_j) - 4K^2, \quad 1 \leq i < j \leq K,
    \end{align}
    and
    \begin{equation}\label{eq:boundd}
        |d'_{ij}| \leq \frac{y'_i y'_j}{4M^2} - 2M (y'_i + y'_j) - 4M^2, \quad 1 \leq i < j \leq M.
    \end{equation}
    This is because the right hand sides of the inequalities~\eqref{eq:boundc} and \eqref{eq:boundd} are quadratic in $N$, whereas the left hand sides grow linearly in $N$.

    Fix an $N$ such that the inequalities~\eqref{eq:boundx}, \eqref{eq:boundc} and \eqref{eq:boundd} are satisfied.
    Then, by Proposition~\ref{prop:nperm}, there exist non-empty words $v, w$ over the alphabets $\mG$ and $\mH$, such that
    \begin{align*}
        \PI^{\mG}(v) & = (x'_1, \ldots, x'_K), \quad
        \delta^{\mG}_{ij}(v) = c'_{ij}, \quad \text{ for } 1 \leq i < j \leq K, \\
        \PI^{\mH}(w) & = (y'_1, \ldots, y'_K), \quad
        \delta^{\mH}_{ij}(v) = d'_{ij}, \quad \text{ for } 1 \leq i < j \leq M.
    \end{align*}
    (Note that Condition~\eqref{eq:condmod} is guaranteed by Equation~\eqref{eq:condshiftmod}.)
    For these words $v, w$, we have
    \begin{equation*}
        \varphi(\log v) = \sum_{i=1}^K x'_i \varphi(\log A_i) = \varphi(\log S) + \sum_{i=1}^M y'_i \varphi(\log B_i) = \varphi(\log S w),
    \end{equation*}  
    as well as
    \begin{multline*}
        \pi(\log v) = \sum_{i=1}^K x'_i \pi(\log A_i) + \frac{1}{2} \sum_{1 \leq i < j \leq K} c'_{ij} \pi([\log A_i, \log A_j]) = \\
        \pi(\log S) + \sum_{i=1}^M y'_i \pi(\log B_i + \frac{1}{2}[\log S, \log B_i]) + \frac{1}{2}\sum_{1 \leq i < j \leq M} d'_{ij} \pi([\log B_i, \log B_j]) = \pi(\log S w).
    \end{multline*}    
    This shows $\log v = \log S w$, hence $\langle \mG \rangle \cap s \cdot \langle \mH \rangle \neq \emptyset$.
\end{proof}

Combining the two cases in Subsections~\ref{subsec:easy} and \ref{subsec:hard}, we are able to solve the Orbit Intersection problem for $\HH_3(\Q)$.

\thmshiftinter*
\begin{proof}
    See Appendix~\ref{app:proofs}.
\end{proof}

\bibliography{intersection}

\appendix
\section{Omitted proofs and remarks}\label{app:proofs}

{\renewcommand\footnote[1]{}\corheisnilp*}
\begin{proof}
    (i) By the remark in Section~\ref{sec:prelim}, the Heisenberg group $\HH_n(\K)$ can be embedded as a subgroup of the group $\UT(n', \Q)$ for some $n'$, such that the input size only changes at most polynomially.
    A direct product of Heisenberg groups $\HH_{n_1}(\K_1) \times \cdots \times \HH_{n_s}(\K_s)$ can hence be embedded as a subgroup of some direct product $\UT(n'_1, \Q) \times \cdots \times \UT(n'_s, \Q)$, which is itself a subgroup of $\UT(n'_1 + \cdots + n'_s, \Q)$.
    Again, the input size only changes polynomially during these embeddings.
    The Heisenberg groups $\HH_n(\K)$ as well as their direct products are 2-step nilpotent~\cite[Examples~13.36]{dructu2018geometric}, and the property of being 2-step nilpotent is preserved under isomorphism.
    Therefore, Theorem~\ref{thm:interempty} shows that Intersection Emptiness for $\HH_n(\K)$ as well as for their direct products is decidable in PTIME.

    (ii) Given a finite presentation or a consistent polycyclic presentation of $G$, an embedding $\phi: G \hookrightarrow A \times G_0$ where $A$ is finite and $G_0$ is a 2-step nilpotent subgroup of some $\UT(n, \Q)$ can be effectively computed (see proof of \cite[Corollary~1.8]{https://doi.org/10.48550/arxiv.2208.02164}).
    Denote by $\pi_0: A \times G_0 \rightarrow G_0$ the projection onto $G_0$.
    
    We claim that $\langle \mG_1 \rangle \cap \cdots \cap \langle \mG_M \rangle \neq \emptyset$ if and only if $\langle \pi_0(\phi( \mG_1 ))\rangle \cap \cdots \cap \langle \pi_0(\phi(\mG_M ))\rangle \neq \emptyset$.
    Suppose $g \in \langle \mG_1 \rangle \cap \cdots \cap \langle \mG_M \rangle$, then obviously $\pi_0(\phi(g)) \in \langle \pi_0(\phi( \mG_1 ))\rangle \cap \cdots \cap \langle \pi_0(\phi(\mG_M ))\rangle$.
    On the other hand, suppose $h \in \langle \pi_0(\phi( \mG_1 ))\rangle \cap \cdots \cap \langle \pi_0(\phi(\mG_M ))\rangle = \pi_0(\phi(\langle \mG_1 \rangle)) \cap \cdots \cap \pi_0(\phi(\langle \mG_M \rangle))$, then there exist $a_1, \ldots, a_M \in A$, such that $(a_i, h) \in \langle \mG_i \rangle$ for all $i$.
    Then $(1, h^{\card(A)}) = (a_i, h)^{\card(A)} \in \langle \mG_i \rangle$ for all $i$, hence $\langle \mG_1 \rangle \cap \cdots \cap \langle \mG_M \rangle \neq \emptyset$.

    Since $G_0$ is a 2-step nilpotent subgroup of $\UT(n, \Q)$, one can decide whether $\langle \pi_0(\phi( \mG_1 ))\rangle \cap \cdots \cap \langle \pi_0(\phi(\mG_M ))\rangle = \emptyset$ by Theorem~\ref{thm:interempty}.
    Thus, we conclude that it is decidable whether $\langle \mG_1 \rangle \cap \cdots \cap \langle \mG_M \rangle \neq \emptyset$.
\end{proof}

We did not attempt to analyse the exact complexity of deciding Intersection Emptiness for arbitrary finitely generated 2-step nilpotent groups.
This is because this complexity depends on the computation and representation of the embedding $\phi$, as well as the size of the finite group $A$.

\thmshiftinter*
\begin{proof}
As mentioned in the beginning of Section~\ref{sec:shiftinter}, one can without loss of generality suppose $T = I$, and decide whether $\langle \mG \rangle \cap S \cdot \langle \mH \rangle \neq \emptyset$.
Given $\mG$ and $\mH$, one can effectively compute $\mC_{\mG} \cap \mC_{\mH}$ and its dimension using linear programming~\cite{schrijver1998theory}.

If $\mC_{\mG} \cap \mC_{\mH}$ has dimension zero or one, then Proposition~\ref{prop:easy} shows we can decide whether $\langle \mG \rangle \cap S \cdot \langle \mH \rangle \neq \emptyset$ by solving a finite number of linear Diophantine equations of the form~\eqref{eq:easylin}.

If $\mC_{\mG} \cap \mC_{\mH}$ has dimension two, then Proposition~\ref{prop:shifttolinalg} shows we can decide whether $\langle \mG \rangle \cap S \cdot \langle \mH \rangle \neq \emptyset$ by solving Equations~\eqref{eq:condshiftlin}, \eqref{eq:condshiftquad} and~\eqref{eq:condshiftmod}.
Equation~\eqref{eq:condshiftmod} can be replaced by a boolean combination of conditions of the form ``$x_i \equiv 0 \mod 2$'', ``$x_i \equiv 1 \mod 2$'', ``$y_i \equiv 0 \mod 2$'', $\ldots$, or ``$d_{ij} \equiv 1 \mod 2$''.
Each of these conditions can be expressed as a linear equation over integers, for example ``$x_i \equiv 1 \mod 2$'' is equivalent to ``$x_i = 2 x'_i + 1, x'_i \in \Z$''.
Therefore, solving Equations~\eqref{eq:condshiftlin}, \eqref{eq:condshiftquad} and~\eqref{eq:condshiftmod} is equivalent to solving a boolean combination of linear equations over integers, which is decidable by integer programming.
\end{proof}

\propeasy*
\begin{proof}
    Let $\mL \subseteq \Q^2$ be a linear space of dimension one that contains $\mC_{\mG} \cap \mC_{\mH}$.
    Then we decompose $\mG$ and $\mH$ into disjoint subsets: $\mG = \mG_0 \cup \mG_+$, $\mH = \mH_0 \cup \mH_+$, where
    \begin{align*}
    \mG_0 & \coloneqq \{ A_i \in \mG \mid \varphi(\log A_i) \in \mL \}, \quad \mG_+ \coloneqq \mG \setminus \mG_0; \\
    \mH_0 & \coloneqq \{ B_i \in \mH \mid \varphi(\log B_i) \in \mL \}, \quad \mH_+ \coloneqq \mH \setminus \mH_0.
    \end{align*}
    The key observation is that all matrices in $\mG_0$ and in $\mH_0$ commute with each other (all $\varphi(\log A_i)$ and $\varphi(\log B_j)$ are linearly dependant, so $[\log A_i, \log A_j] = [\log B_i, \log B_j] = 0$).
    
    Suppose $\langle \mG \rangle \cap S \cdot \langle \mH \rangle \neq \emptyset$,
    that is, there exist words $v$ in the alphabet $\mG$ and $w$ in the alphabet $\mH$ such that $\log v = \log S w$.
    We show that the number of occurrences of letters of $\mG_+$ in $v$ is bounded; similarly, the number of occurrences of letters of $\mH_+$ in $w$ is bounded.
    
    Let $\bn$ be a non-zero vector orthogonal to $\mL$, then $\bx \mapsto \bn^{\top} \bx$ is the projection parallel to $\mL$.
    Since $\mC_{\mG} \cap \mC_{\mH} \subseteq \mL$, the values $\bn^{\top} \varphi(\log A_i), A_i \in \mG$ have signs opposite to that of $\bn^{\top} \varphi(\log B_j), B_j \in \mH$.
    Without loss of generality, suppose $\bn^{\top} \varphi(\log A_i) \geq 0$ for all $A_i \in \mG$ and $\bn^{\top} \varphi(\log B_j) \leq 0$ for all $B_j \in \mH$.
    Since $\bn$ is orthogonal to $\mL$, we have furthermore $\bn^{\top} \varphi(\log A_i) > 0$ for all $A_i \in \mG_+$ and $\bn^{\top} \varphi(\log B_j) < 0$ for all $B_j \in \mH_+$; as well as $\bn^{\top} \varphi(\log X) = 0$ for all $X \in \mG_0 \cup \mH_0$.
    
    Now, $\log v = \log S w$ yields $\varphi(\log v) = \varphi(\log S) + \varphi(\log w)$.
    Projecting onto $\bn$, this shows
    \[
    \sum_{i, A_i \in \mG_+} \PI^{\mG}_i(v) \cdot \bn^{\top} \varphi(\log A_i) = \bn^{\top} \varphi(S) + \sum_{i, B_i \in \mH_+} \PI^{\mH}_i(w) \cdot \bn^{\top} \varphi(\log B_i).
    \]
    This yields
    \begin{equation}\label{eq:boundPI}
    \PI^{\mG}_i(v) \leq \frac{\bn^{\top} \varphi(\log S)}{\bn^{\top} \varphi(\log A_i)},
    \quad 
    \PI^{\mH}_j(v) \leq \frac{\bn^{\top} \varphi(\log S)}{\bn^{\top} \varphi(\log B_j)},
    \end{equation}
    for all $A_i \in \mG_+$ and $B_j \in \mH_+$.
    This gives bounds $\beta_{\mG} \coloneqq \sum_{i, A_i \in \mG_+} \frac{\bn^{\top} \varphi(\log S)}{\bn^{\top} \varphi(\log A_i)}$ and $\beta_{\mH} \coloneqq \sum_{i, B_i \in \mH_+} \frac{\bn^{\top} \varphi(\log S)}{\bn^{\top} \varphi(\log B_i)}$, such that if $\log v = \log S w$, then the number of letters of $\mG_+$ in $v$ is bounded by $\beta_{\mG}$; and similarly the number of letters of $\mH_+$ in $w$ is bounded by $\beta_{\mH}$.
    
    Write $v = v_0 C_1 v_1 C_2 \cdots v_{s-1} C_{s} v_s$, where $C_1, \ldots, C_s$ are matrices in $\mG_+$, and $v_0, \ldots, v_s$ are words in the alphabet $\mG_0$.
    Similarly, write $w = w_0 D_1 w_1 D_2 \cdots w_{t-1} D_{t} w_t$, where $D_1, \ldots, D_t$ are matrices in $\mH_+$, and $w_0, \ldots, w_t$ are words in the alphabet $\mH_0$.
    Write $\mG_0 = \{A'_1, \ldots, A'_{K'}\}$ and $\mH_0 = \{B'_1, \ldots, B'_{M'}\}$.
    Define $x_{ij} \coloneqq \PI^{\mG_0}_j (v_i)$ for $0 \leq i \leq s, 1 \leq j \leq K'$, and $y_{ij} \coloneqq \PI^{\mH_0}_j (w_i)$ for $0 \leq i \leq t, 1 \leq j \leq M'$.
    Then $\log v = \log S w$ is equivalent to
    \begin{multline}\label{eq:easylin}
        \sum_{i = 1}^s \log C_i + \sum_{i = 0}^s \sum_{j = 1}^{K'} x_{ij} \log A'_j + \frac{1}{2}\sum_{0 \leq i < k \leq s} \sum_{j = 1}^{K'} x_{ij} [\log A'_j, \log C_k] \\
        + \frac{1}{2}\sum_{1 \leq k \leq i \leq s} \sum_{j = 1}^{K'} x_{ij} [\log C_k, \log A'_j] \\
        = \log S + \sum_{i = 1}^t (\log D_i + \frac{1}{2}[\log S, \log D_i]) + \frac{1}{2}\sum_{i = 0}^t \sum_{j = 1}^{M'} y_{ij} [\log S, \log B'_j] \\
        + \frac{1}{2}\sum_{0 \leq i < k \leq t} \sum_{j = 1}^{M'} y_{ij} [\log B'_j, \log D_k] + \frac{1}{2}\sum_{1 \leq k \leq i \leq t} \sum_{j = 1}^{M'} y_{ij} [\log D_k, \log B'_j]
    \end{multline}
    All other terms contain $[\log A'_i, \log A'_j]$ or $[\log B'_i, \log B'_j]$ and hence vanish by the commutativity of $\mG_0$ and $\mH_0$.
    Note that Equation~\eqref{eq:easylin} is a linear Diophantine equation in the variables $x_{ij}, y_{ij}$.
    Therefore, $\log v = \log S w$ has a solution if and only if there exist matrices $C_1, \ldots, C_s$ in $\mG_+$ and matrices $D_1, \ldots, D_t$ in $\mH_+$, such that Equation~\eqref{eq:easylin} has a solution in non-negative integers, with the additional constraint that, if $s = 0$, then $(x_{01}, \ldots, x_{0K'}) \neq \bzero$; and if $t = 0$, then $(y_{01}, \ldots, y_{0M'}) \neq \bzero$.
    This additional constraint comes from the condition that $v, w$ are not empty words.
    Recall the bounds $s \leq \beta_{\mG}$ and $t \leq \beta_{\mH}$.
    Hence, deciding whether $\log v = \log S w$ has a solution amounts to solving finitely many linear Diophantine equations of the form~\eqref{eq:easylin}.
\end{proof}

In theory, it is possible to give a bound on the complexity of the procedure described in Proposition~\ref{prop:easy}.
The size of the each bound in Equation~\eqref{eq:boundPI} is exponential in the bit size of the entries $S, \mG, \mH$.
Hence the procedure consists of solving exponentially many linear Diophantine equations.

\begin{prop}\label{prop:alg1}
    Algorithm~\ref{alg:inter} is correct and terminates in polynomial time.
\end{prop}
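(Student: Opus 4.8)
The plan is to prove correctness and polynomial running time separately. Correctness rests on a single monotone invariant for the sets $S_m$ maintained by the main loop; the time bound follows from a bound on the number of iterations together with the observation that each iteration only performs linear algebra over $\Q$ plus one call to Lemma~\ref{lem:compsupp}.

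For correctness, the invariant I would establish, by induction on the number of completed passes through Step~2, is: every tuple $(\bl_1,\dots,\bl_M,(c_{mij}))$ witnessing $\langle\mG_1\rangle\cap\cdots\cap\langle\mG_M\rangle\neq\emptyset$ in the sense of Proposition~\ref{prop:intertolinalg} — so $\bl_m\in\Zp^{K_m}\setminus\{\bzero\}$ and $c_{mij}$ is indexed by $i<j$ in $\supp(\bl_m)$ — satisfies $\supp(\bl_m)\subseteq S_m$ for every $m$. The base case is immediate since $S_m$ is initialized to $\{1,\dots,K_m\}$. For the inductive step: granting $\supp(\bl_m)\subseteq S_m$, I would pad the family $(c_{mij})$ with zeros to all pairs $i<j$ in $S_m$; since the added commutators carry coefficient $0$, the padded tuple satisfies exactly the defining equation of $W$ in~\eqref{eq:defW}, so $(\bl_{mj})\in\pi_{\bl}(W)$, and being a non-negative integer vector it lies in $\Lambda$; hence $\supp(\bl_m)\subseteq\supp(\Lambda)\cap S_m$, which is precisely the updated $S_m$. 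As the $S_m$ can only shrink, the invariant survives all future passes. Consequently, if the algorithm returns \textbf{True} (some $S_m=\emptyset$) then no such tuple exists and by Proposition~\ref{prop:intertolinalg} the intersection is empty.

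For the converse I would argue that if the algorithm returns \textbf{False} then at termination $S_m\subseteq\supp(\Lambda)$ and every $S_m\neq\emptyset$. Because $\Lambda=\Zp^{\sum_m K_m}\cap\pi_{\bl}(W)$ is closed under addition and $\supp(\bl+\bl')=\supp(\bl)\cup\supp(\bl')$ for non-negative $\bl,\bl'$, summing one witness per coordinate produces a single $\bl^*\in\Lambda$ with $\supp(\bl^*)=\supp(\Lambda)$, so in each block $m$ one has $S_m\subseteq\supp(\bl^*_m)$ and in particular $\bl^*_m\neq\bzero$. Since $\bl^*\in\pi_{\bl}(W)$ there are rationals $c^*_{mij}$ (over pairs $i<j$ in $S_m$) with $(\bl^*,(c^*_{mij}))\in W$; extending them by zero to all pairs $i<j$ in $\supp(\bl^*_m)$ — legitimate because $S_m\subseteq\supp(\bl^*_m)$ — yields a tuple of exactly the shape demanded by Proposition~\ref{prop:intertolinalg}, so the intersection is non-empty.

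Finally, for termination and complexity: a pass of Step~2 that does not terminate the loop strictly shrinks some $S_m$, hence strictly decreases the non-negative integer $\sum_m\card(S_m)\leq\sum_m K_m$, so there are at most $\sum_m K_m$ passes. Inside one pass, each $\log A_{mj}$ and each commutator $[\log A_{mi},\log A_{mj}]$ is a finite sum (the relevant strictly upper triangular matrices are nilpotent of index $\leq n$), computable in polynomial time with polynomial-size rationals; the defining equations of $W$ in~\eqref{eq:defW} form $O(Mn^2)$ homogeneous linear equations in $O(\sum_m K_m^2)$ variables, so $W$ and its projection $\pi_{\bl}(W)$ (obtained by eliminating the $c$-variables via Gaussian elimination) have polynomial-size linear-system representations computable in polynomial time, and $\supp(\Lambda)$ is then computed in polynomial time by Lemma~\ref{lem:compsupp}. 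Multiplying by the iteration bound gives an overall polynomial-time procedure. The only non-routine point — the main obstacle — is pinning down the invariant, specifically reconciling the index set $\supp(\bl_m)$ appearing in Proposition~\ref{prop:intertolinalg} with the fixed index set $S_m$ used by the algorithm; the zero-padding argument, used in both directions, is what makes the two compatible.
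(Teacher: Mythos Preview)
Your proof is correct and follows essentially the same line as the paper's: the same monotone invariant $\supp(\bl_m)\subseteq S_m$, the same use of additivity of $\Lambda$ to extract a full-support witness, and the same iteration bound and per-iteration cost analysis. Your explicit zero-padding argument, applied in both directions, handles the bookkeeping between the index sets $S_m$ and $\supp(\bl_m)$ a touch more carefully than the paper does.
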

\begin{proof}
    We prove that Algorithm~\ref{alg:inter} outputs \textbf{False} if and only if $\langle \mG_1 \rangle \cap \cdots \cap \langle \mG_M \rangle \neq \emptyset$.
    
    After each iteration of Step~\ref{step:intermainloop}, $\card(S_1) + \cdots + \card(S_M)$ strictly decreases. 
    Therefore, the algorithm terminates after at most $K_1 + \cdots + K_M$ iterations of Step~\ref{step:intermainloop}.
    
    We now show correctness of the algorithm.
    We first show that when Algorithm~\ref{alg:inter} returns \textbf{False}, then $\bigcap_{i=1}^M \langle \mG_i \rangle \neq \emptyset$.
    Suppose the algorithm terminates with output False, the condition in Step~\ref{step:intermainloop}(d) shows that $\supp(\Lambda) \cap S_m = S_m$ for all $1 \leq m \leq M$.
    By the additivity of $\Lambda$ (that is, $\ba, \bb \in \Lambda \implies \ba+\bb \in \Lambda$), there exists a vector $\bl = (\bl_1, \ldots, \bl_M) \in \Lambda$ such that $\supp(\bl) = \supp(\Lambda)$.
    This yields $\supp(\bl_m) = \supp(\Lambda) \cap S_m = S_m$ for all $m$.
    Since $\supp(\bl_m) = S_m \neq \emptyset$, we have $\bl_m \neq \bzero$ for all $1 \leq m \leq M$.
    By the definition~\eqref{eq:defpiW} of $\pi_{\bl}(W)$, there exist rational numbers $(c_{mij})_{1 \leq m \leq M, i, j \in S_m}$ such that 
    \begin{multline}\label{eq:alleqS}
        \sum_{j = 1}^{K_1} \ell_{1j} \log A_{1j} + \sum_{\overset{i < j}{i, j \in S_1}} c_{1ij} [\log A_{1i}, \log A_{1j}] = \sum_{j = 1}^{K_2} \ell_{2j} \log A_{2j} + \sum_{\overset{i < j}{i, j \in S_2}} c_{2ij} [\log A_{2i}, \log A_{2j}] \\
        = \cdots = \sum_{j = 1}^{K_M} \ell_{Mj} \log A_{Mj} + \sum_{\overset{i < j}{i, j \in S_M}} c_{Mij} [\log A_{Mi}, \log A_{Mj}]        
    \end{multline}
    Since $S_m = \supp(\bl_m)$ for all $m$, Equation~\eqref{eq:alleqS} is identical to Equation~\eqref{eq:condinter} in Proposition~\ref{prop:intertolinalg}.
    Therefore Proposition~\ref{prop:intertolinalg} shows $\bigcap_{i=1}^M \langle \mG_i \rangle \neq \emptyset$.

    Next, we show that if $\bigcap_{i=1}^M \langle \mG_i \rangle \neq \emptyset$, then Algorithm~\ref{alg:inter} returns \textbf{False}.
    Suppose $\bigcap_{i=1}^M \langle \mG_i \rangle \neq \emptyset$.
    By Proposition~\ref{prop:intertolinalg}, there exist $\bl_1 = (\ell_{1j})_{1 \leq j \leq K_1} \in \Zp^{K_1} \setminus \{\bzero\}, \ldots, \bl_M = (\ell_{Mj})_{1 \leq j \leq K_M} \in \Zp^{K_M} \setminus \{\bzero\}$, and rational numbers $(c_{mij})_{1 \leq m \leq M, i, j \in \supp(\bl_m)}$ that satisfies Equation~\eqref{eq:condinter} in Proposition~\ref{prop:intertolinalg}.
    We show that ``$\supp(\bl_m) \subseteq S_m$ for all $1 \leq m \leq M$'' is an invariant of the algorithm.

    At initialization, we obviously have $\supp(\bl_m) \subseteq S_m = \{1, \ldots, K_m\}$.
    Before each iteration of \ref{step:intermainloop}(d), suppose we have $\supp(\bl_m) \subseteq S_m$ for all $m$, then Equation~\eqref{eq:condinter} shows that $(\ell_{mj})_{1 \leq m \leq M, 1 \leq j \leq K_m} \in \pi_{\bl}(W)$.
    Consequently, $\supp(\bl_m) \subseteq \supp(\Lambda)$, meaning $\supp(\bl_m) \subseteq S_m$ still holds after \ref{step:intermainloop}(d).

    This invariant shows that $\supp(\bl_m) \subseteq S_m$ for all $m$ by the start of Step~\ref{step:interoutput}.
    Since $\bl_m \in \Zp^{K_m} \setminus \{\bzero\}$, $\supp(\bl_m)$ is non-empty for every $m$.
    We conclude that $S_m \neq \emptyset$ for all $m$ by the start of Step~\ref{step:interoutput}.
    Therefore, Algorithm~\ref{alg:inter} returns \textbf{False}.

    Finally, we show that Algorithm~\ref{alg:inter} terminates in polynomial time.
    Recall that the algorithm terminates after at most $K_1 + \cdots + K_M$ iterations of Step~\ref{step:intermainloop}.
    At each iteration of Step~\ref{step:intermainloop}(b), the projection can be computed in polynomial time by eliminating the variables $(c_{mij})_{1 \leq m \leq M, i, j \in S_m}$ from the equations defining $W$. 
    Then, at each iteration of Step~\ref{step:intermainloop}(c) the support $\supp(\Lambda)$ is computed by Lemma~\ref{lem:compsupp}. 
    The total input size of the linear programming instances is polynomial with respect to the total bit length of the matrix entries in $\mG_1, \ldots, \mG_M$.
    Indeed, the total bit length of $\log A_{mi}$ and $[\log A_{mi}, \log A_{mj}]$ is at most of quadratic size in $\mG_m$; and the projection performed in Step~\ref{step:intermainloop}(b) can only alter the total entry bit size at most polynomially.
    From this, one can express $\pi_{\bl}(W)$ as the solution set of a system of homogeneous linear equations whose total bit length is polynomial in $\mG_1, \ldots, \mG_M$.
    Hence Lemma~\ref{lem:compsupp} computes the support of $\Lambda \coloneqq \Zp^{\sum_{m = 1}^M K_m} \cap \pi_{\bl}(W)$ in polynomial time.
    Therefore, each iteration of Step~\ref{step:intermainloop} takes polynomial time, and thus the overall complexity of Algorithm~\ref{alg:inter} is polynomial with respect to the input $\mG_1, \ldots, \mG_M$.
\end{proof}

\end{document}